\documentclass[a4paper,11pt, reqno]{amsart}

\usepackage{amssymb}
\usepackage{amsmath}
\usepackage{amstext}
\usepackage{amscd}
\usepackage{amsthm}
\usepackage{amsfonts}
\usepackage{mathtools}
\usepackage{array}
\usepackage{color}
\usepackage[all]{xy}
\usepackage{bm}
\usepackage{caption}

\usepackage{tikz}
\usetikzlibrary{arrows}
\usetikzlibrary{snakes}
\usetikzlibrary{shapes}
\usetikzlibrary{calc,decorations.markings}

\usepackage{hyperref}

\makeatletter
  
  \@addtoreset{equation}{section}
\makeatother

\newtheorem{thm}{Theorem}[section]
\newtheorem{lem}[thm]{Lemma}
\newtheorem{prop}[thm]{Proposition}
\newtheorem{cor}[thm]{Corollary}

\newtheorem{que}[thm]{Question}

\theoremstyle{definition}
\newtheorem{dfn}[thm]{Definition}
\newtheorem{ex}[thm]{Example}

\theoremstyle{remark}
\newtheorem{rem}[thm]{Remark}

\newcommand{\rmc}{\mathrm{c}}

\newcommand{\rme}{\mathrm{e}}

\newcommand{\rmH}{\mathrm{H}}

\newcommand{\calH}{\mathcal{H}}

\newcommand{\calO}{\mathcal{O}}

\newcommand{\fkm}{\mathfrak{m}}

\newcommand{\ZZ}{\mathbb{Z}}

\newcommand{\sfN}{\mathsf{N}}
\newcommand{\sfC}{\mathsf{C}}

\newcommand{\ttI}{\mathtt{I}}

\newcommand{\Spec}{\operatorname{Spec}}

\newcommand{\depth}{\operatorname{depth}}

\newcommand{\rank}{\operatorname{rank}}

\newcommand{\add}{\operatorname{add}}

\newcommand{\tor}{\operatorname{tor}}
\newcommand{\Hom}{\operatorname{Hom}}

\newcommand{\SL}{\operatorname{SL}}

\newcommand{\CM}{\operatorname{CM}}

\setlength{\textwidth}{16cm} 
\setlength{\textheight}{24.5cm}
\setlength{\oddsidemargin}{0cm} 
\setlength{\evensidemargin}{0cm} 
\setlength{\topmargin}{0cm} 
\setlength{\headheight}{0cm} 
\setlength{\headsep}{1cm} 
\setlength{\footskip}{0.5cm}

\begin{document}

\title[Ulrich modules over cyclic quotient surface singularities]{Ulrich modules over cyclic quotient surface singularities}
\author[Yusuke Nakajima \and Ken-ichi Yoshida]{Yusuke Nakajima \and Ken-ichi Yoshida}
\date{}

\subjclass[2010]{Primary 13C14 ; Secondary 14E16, 14B05, 16G70.}
\keywords{Ulrich modules, special Cohen-Macaulay modules, McKay correspondence, cyclic quotient surface singularities.}

\address[Yusuke Nakajima]{Graduate School Of Mathematics, Nagoya University, Chikusa-Ku, Nagoya,
 464-8602 Japan} 
\email{m06022z@math.nagoya-u.ac.jp}

\address[Ken-ichi Yoshida]{Department of Mathematics, College of Humanities and Sciences, Nihon University, 3-25-40 Sakurajosui, Setagaya-Ku, Tokyo 156-8550, Japan} 
\email{yoshida@math.chs.nihon-u.ac.jp}
\maketitle

\begin{abstract} 
In this paper, we characterize Ulrich modules over cyclic quotient surface singularities 
using the notion of special Cohen-Macaulay modules. 
We also investigate the number of indecomposable Ulrich modules for a given cyclic quotient surface singularity, 
and show that the number of exceptional curves in the minimal resolution determines a boundary on the number of indecomposable Ulrich modules. 
\end{abstract}


\section{Introduction}

Let $(R, \fkm, \Bbbk)$ be a Cohen-Macaulay (= CM) local ring, with $\dim\,R=d$. 
For a finitely generated $R$-module $M$, we say that $M$ is a maximal Cohen-Macaulay (= MCM) $R$-module if $\depth_RM=d$. 
For each MCM $R$-module $M$, we have that $\mu_R(M)\le \rme_\fkm(M)$, where $\mu_R(M)$ denotes the number of minimal generators 
(i.e., $\mu_R(M)=\dim_\Bbbk M/\fkm M$), and $\rme_\fkm(M)$ is the multiplicity of $M$ with respect to $\fkm$. 
Note that if $R$ is a domain, then we have that $\rme_\fkm(M)=(\rank_RM)\rme_\fkm(R)$. 

An Ulrich module is defined as a module that has the maximum number of generators with respect to the above inequality. 
We sometimes call this a maximally generated maximal Cohen-Macaulay module, in line with the original terminology \cite{Ulr, BHU}. 
The name ``Ulrich modules" was introduced in \cite{HK}. 
We remark that the conditions below are inherited by direct summands and direct sums, 
and hence Ulrich modules are closed under direct summands and direct sums.

\begin{dfn} [\cite{Ulr, BHU}] 
\label{def_Ulrich}
Let $M$ be an MCM $R$-module. We say that $M$ is an Ulrich module if it satisfies $\mu_R(M)= \rme_\fkm(M)$. 
\end{dfn}

Several properties of these modules have been investigated in the aforementioned references. 
In a more geometric setting, they have been studied as Ulrich bundles, for example in \cite{ESW, CH1, CH2, CKM}. 
Recently, this notion was generalized for each non-parameter $\fkm$-primary ideal $I$ in \cite{GOTWY1}, 
and this notion has been actively studied (cf. \cite{GOTWY2, GOTWY3}). 
Namely, we say that an MCM $R$-module $M$ is an Ulrich module ``\,with respect to $I$\," if it satisfies the following conditions:
\[
 (1)\, \rme_I(M)=\ell_R(M/IM),  \quad (2)\, M/IM \text{ is an $R/I$-free module},
\]
where $\rme_I(M)$ is the multiplicity of $M$ with respect to $I$, and $\ell_R(M/IM)$ denotes the length of $M/IM$. 
Thus, an Ulrich module with respect to $\fkm$ is nothing else but an Ulrich module in the sense of Definition~\ref{def_Ulrich}. 
(The condition $(2)$ is automatically satisfied if $I=\fkm$.) 
In addition, Ulrich modules have appeared in an attempt to formulate the notion of ``almost Gorenstein rings" \cite{GTT}. 
Thus, it has become more important to understand these modules. 
However, even the existence of an Ulrich module for a given CM local ring is still not known in general. 
Another important problem is to characterize (and classify) Ulrich modules when a given ring $R$ admits an Ulrich module. 
For example, we know the existence of such a module for the following cases: 
\begin{itemize}
\item[$\cdot$] A two dimensional domain with an infinite field \cite{BHU}. 
\item[$\cdot$] A CM local ring that has maximal embedding dimension \cite{BHU}. 
\item[$\cdot$] A strict complete intersection \cite{HUB}. 
\item[$\cdot$] A Veronese subring of a polynomial ring over a field of characteristic $0$ \cite{ESW}. 
\end{itemize}
The characterization problem has also not been solved in many cases. 
Therefore, in this paper we will characterize Ulrich modules (with respect to $\fkm$) over cyclic quotient surface singularities. 
We remark that this singularity is of finite CM representation type (i.e., it has only finitely many non-isomorphic indecomposable MCM modules). 
Since the number of indecomposable Ulrich modules is finite, we will also consider the number of them. 
The key point is to consider special CM modules (see Definition~\ref{def_special}). 
This is another class of MCM modules, and is closely related with the minimal resolution of a quotient surface singularity 
(see Theorem~\ref{special_mckay}). 
Typically, the number of minimal generators of a special CM module is small. 
Thus, special CM modules opposed to Ulrich modules in this sense, 
but these provide us with a simple description of Ulrich modules as follows. 
(For further details regarding terminologies, see later sections.) 

\medskip

Let $R$ be the invariant subring of $\Bbbk[[x, y]]$ under the action of a cyclic group $\frac{1}{n}(1, a)$. 
Then $ M_t\coloneqq\Bigl<x^iy^j\;\Big|\;i+ja\equiv t\;\;(\mathrm{mod}\;n)\Bigl>$ is an MCM $R$-module for $t=0,1,\cdots,n-1$, 
and these cover all the indecomposable MCM modules. 
Suppose that $M_{i_1}, \cdots, M_{i_r}$ are non-free indecomposable special CM $R$-modules ($i_1>\cdots>i_r$). 
We call the subscripts $(i_1, \cdots, i_r)$ the $i$-series. 
Then, we define integers $(d_{1,t},\cdots,d_{r,t})$ for each subscript $t\in [0, n-1]$ as follows: 
\begin{eqnarray*}
  t=d_{1,t}i_1+h_{1,t},\quad &h_{1,t}\in\mathbb{Z}_{\ge 0},&\quad 0\le h_{1,t}<i_1, \\
  h_{u,t}=d_{u+1,t}i_{u+1}+h_{u+1,t},\quad &h_{u+1,t}\in\mathbb{Z}_{\ge 0},&\quad 0\le h_{u+1,t}<i_{u+1},\quad(u=1,\cdots,r-1), \\
  &h_{r,t}=0.&
\end{eqnarray*}
Then, we can describe $t$ as $t=d_{1,t}i_1+d_{2,t}i_2+\cdots+d_{r,t}i_r $, and obtain the following. 

\begin{thm} [= Theorem~\ref{main} and Corollary~\ref{main_cor}]
$M_t$ is an Ulrich $R$-module if and only if $d_{1,t}+d_{2,t}+\cdots+d_{r,t}=\rme(R)-1$ 
where $\rme(R)$ is the multiplicity of $R$ with respect to the maximal ideal. 
\end{thm}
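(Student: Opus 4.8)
The plan is to reduce the Ulrich condition to a count of minimal generators, and then to compute that count directly from the $i$-series. Since $R$ is a normal domain and each $M_t$ has rank one (it is isomorphic to a divisorial ideal, being spanned by monomials lying in a single class of $\ZZ/n\ZZ$), the multiplicity formula for domains recalled in the introduction gives $\rme^0_\fkm(M_t)=(\rank_R M_t)\,\rme(R)=\rme(R)$. Hence, by Definition~\ref{def_Ulrich}, $M_t$ is an Ulrich module if and only if $\mu_R(M_t)=\rme(R)$. Thus the theorem is equivalent to the purely combinatorial identity
\[
  \mu_R(M_t)=1+\sum_{u=1}^{r} d_{u,t},
\]
which I will call the generator formula: granting it, $M_t$ is Ulrich exactly when $1+\sum_u d_{u,t}=\rme(R)$, that is, when $\sum_u d_{u,t}=\rme(R)-1$.

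To establish the generator formula I would first treat the building blocks. Using the correspondence between special CM modules and the exceptional curves of the minimal resolution (Theorem~\ref{special_mckay}), one checks that each non-free special CM module $M_{i_u}$ is minimally generated by exactly two monomials, and that $i_r=1$, so that the greedy expansion $t=d_{1,t}i_1+\cdots+d_{r,t}i_r$ terminates with $h_{r,t}=0$ for every $t$; in particular the base case $\mu_R(M_{i_u})=2=1+\sum_v d_{v,i_u}$ holds. I would then run an induction along the greedy algorithm. The carries $h_{u,t}$ exhibit $M_t$ as obtained from the building blocks $M_{i_1},\dots,M_{i_r}$ by a sequence of reflexive products, which realize the addition $[M_s]+[M_{s'}]=[M_{s+s'}]$ in the class group $\ZZ/n\ZZ$, one factor $M_{i_u}$ being multiplied in for each unit of each digit $d_{u,t}$. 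The inductive step is the local generator count
\[
  \mu_R\bigl((M_s\cdot M_{i_u})^{**}\bigr)=\mu_R(M_s)+1,
\]
valid precisely when the combination is \emph{carry-free}; summing these increments then yields the generator formula.

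The main obstacle is exactly this carry-free increment. The naive identity $\mu_R((M\cdot N)^{**})=\mu_R(M)+\mu_R(N)-1$ is \emph{false} in general: it fails whenever adding the monomial exponents forces a reduction modulo $n$, as one already sees in small cases where a product such as $M_{i_1}\cdot M_{i_1}$ wraps around. The heart of the proof is therefore to show that the inequalities $0\le h_{u,t}<i_u$ defining the greedy expansion are exactly what rule out such collapses. Concretely, I would describe the minimal generators of $M_t$ as the inner corners of the monomial staircase $\{(i,j)\in\ZZ_{\ge 0}^2 : i+aj\equiv t\ (\mathrm{mod}\ n)\}$ over the semigroup of $R$, and set up a bijection between these corners and the steps of the greedy expansion: each additional unit in a digit $d_{u,t}$ contributes exactly one new corner, while the bounds $0\le h_{u,t}<i_u$ guarantee that distinct steps produce distinct corners and that no corner is lost to a semigroup relation or to a wrap-around cancellation. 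Verifying this corner bookkeeping is the technical crux; once it is in place the generator formula, and hence the theorem, follows at once by the reduction above.
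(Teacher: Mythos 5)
Your reduction is exactly the paper's: since $R$ is a domain and $\rank_RM_t=1$, one has $\rme(M_t)=\rme(R)$, so $M_t$ is Ulrich if and only if $\mu_R(M_t)=\rme(R)$, and everything rests on the generator formula $\mu_R(M_t)=1+\sum_ud_{u,t}$, which is precisely Theorem~\ref{main}. Two of your supporting claims are correct and worth keeping: first, because any two monomials of $M_t$ differ by a weight divisible by $n$, a monomial $x^iy^j\in M_t$ lies in $\fkm M_t$ if and only if it properly dominates another monomial of $M_t$ componentwise, so $\mu_R(M_t)$ really is the number of corners of the staircase $\{(i,j)\in\ZZ_{\ge0}^2 : i+ja\equiv t \ (\mathrm{mod}\ n)\}$; second, deleting one part from a valid expansion preserves both conditions of Lemma~\ref{key_lem1}, so your induction along the greedy expansion is well posed. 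This route is essentially Wunram's original monomial computation in \cite{Wun1} (which the paper's geometric proof cites rather than reproves, and which its representation-theoretic proof re-derives in AR-quiver language).

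The genuine gap is the inductive step itself: the carry-free increment $\mu_R\bigl((M_s\cdot M_{i_u})^{**}\bigr)=\mu_R(M_s)+1$, i.e.\ your claim that ``each additional unit in a digit contributes exactly one new corner and no corner is lost,'' is asserted rather than proved, and it is the entire substance of Theorem~\ref{main}. It does not follow formally from the inequalities $0\le h_{u,t}<i_u$. Writing the corners of $M_s$ as $(e_p,f_p)$ with $e_p$ decreasing, $f_p$ increasing, and inserting the new part, the statement that every monomial of $M_{s+i_u}$ dominates one of the $\mu_R(M_s)+1$ candidate corners reduces, after translating by $f_p$, to the quantitative lattice fact: for every $w$ and every $0<j<j_w$ one has $ja\bmod n\ \ge\ i_{w-1}$; equivalently, in the range $j_{w-1}<j<j_w$ the values $ja\bmod n$ never drop below $i_{w-1}$, so no unexpected corner appears. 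Theorem~\ref{cyclic_special} (two-generatedness of $M_{i_{w-1}}$) gives this only for $0<j<j_{w-1}$; extending it across all of $(0,j_w)$ requires an induction on the recursions $i_{w+1}=\alpha_wi_w-i_{w-1}$, $j_{w+1}=\alpha_wj_w-j_{w-1}$ --- this is Wunram's key lemma, and it is what the paper's block-division argument encodes and what its geometric proof imports as $\rmc_1(\widetilde{M_t})\cdot E_{i_u}=d_{u,t}$ from \cite{Wun2}. Note also that ``carry-free'' cannot mean merely ``no reduction modulo $n$'': in $\frac{1}{12}(1,7)$ one has $8=2+2+2+2<12$ with no wrap-around, yet $\mu_R(M_8)=3$, not $5$; the correct condition is that the augmented digit sequence still satisfies Lemma~\ref{key_lem1}, and showing that this condition suffices is exactly the missing lemma. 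Until you supply it, the ``corner bookkeeping'' is a restatement of the theorem rather than a proof of it.
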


We can prove this theorem by combining the Riemann-Roch formula \cite{Kat} with the special McKay correspondence \cite{Wun1, Wun2}, 
and hence this is a reinterpretation of Wunram's results from the viewpoint of Ulrich modules. 
In addition to a proof of this form, we will provide another one based on Auslander-Reiten theory. 
By using this theorem, we can check which MCM $R$-modules are Ulrich. 
However, it is sometimes challenging to compute $(d_{1,t},\cdots,d_{r,t})$ for all $t=0, 1, \cdots, n-1$.  
Thus, we provide a sharper characterization of Ulrich modules in terms of the $i$-series as follows. 
The crucial point is to consider good sequences of the $i$-series. 

\begin{thm} [= Theorem~\ref{main2} and \ref{main3}]
Take any sequences of the $i$-series $i_{k(1)}>i_{k(2)}>\cdots>i_{k(2b)}$ with $i_{k(2c-1)}\in\ttI_{n-1}$ (see (\ref{def_In})), for all $c=1,\cdots,b$. 
If $t=n-1-\displaystyle\sum^b_{c=1}\big(i_{k(2c-1)}-i_{k(2c)}\big)$ or $t=n-1$, then $M_t$ is an Ulrich module. 

Conversely, if $M_t$ is an Ulrich module $(t\neq n-1)$, then there exists a sequence 
of the $i$-series $i_{k(1)}>i_{k(2)}>\cdots>i_{k(2b)}$ with $i_{k(2c-1)}\in\ttI_{n-1}$ for all $c=1,\cdots,b$ and 
\[
t=n-1-\displaystyle\sum^b_{c=1}\big(i_{k(2c-1)}-i_{k(2c)}\big).
\]
\end{thm}

As a corollary, we obtain the following. 

\begin{cor}[= Corollary~\ref{main_cor2}] 
We suppose that $M_t$ is an Ulrich module. 
Then, we have that $n-a\le t\le n-1$. 
Furthermore, $M_{n-1}$ and $M_{n-a}$ are certainly Ulrich modules. 
\end{cor}

In particular, this result gives us an upper bound on the number of Ulrich modules. 
Namely, this number is less than or equal to $a$. 
Furthermore, we can obtain additional bounds from further geometric information.  

\begin{thm}[= Theorem~\ref{thm_upper}]
Suppose that $R$ is a cyclic quotient surface singularity, whose number of  irreducible exceptional curves 
$($= that of non-free indecomposable special CM modules$)$ is $r$. 
Then, the number of Ulrich modules $\sfN$ satisfies $r\le\sfN\le 2^{r-1}$.  
\end{thm}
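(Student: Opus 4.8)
The plan is to reduce the statement to an elementary fact about subset sums and then read off both bounds from it. By Theorems~\ref{main2} and~\ref{main3}, an $R$-module $M_t$ with $t\neq n-1$ is Ulrich precisely when $t=n-1-\sum_{c=1}^b\big(i_{k(c)}-i_{k(c)^\prime}\big)$ for some chained sequence of good pairs $\big(i_{k(1)}, i_{k(1)^\prime}\big),\dots,\big(i_{k(b)}, i_{k(b)^\prime}\big)\in\sfU$ with $i_{k(c)^\prime}>i_{k(c+1)}$, while $M_{n-1}$ is always Ulrich. Hence counting $\sfN$ is the same as counting the distinct values of $t$ produced this way, i.e. the distinct values of $\sum_{c=1}^b\big(i_{k(c)}-i_{k(c)^\prime}\big)$, the empty sequence giving $0$ and corresponding to $M_{n-1}$.

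First I would rewrite each admissible sum in terms of the consecutive differences $\delta_g\coloneqq i_g-i_{g+1}>0$ for $g=1,\dots,r-1$. Viewing the $i$-series as the $r$ indices $1,2,\dots,r$ and a pair $\big(i_{k(c)}, i_{k(c)^\prime}\big)$ as the interval $[k(c),k(c)^\prime]$ (note $k(c)<k(c)^\prime$), the chaining condition $i_{k(c)^\prime}>i_{k(c+1)}$ says exactly that these intervals are pairwise disjoint and separated. Since $i_{k(c)}-i_{k(c)^\prime}=\sum_{g=k(c)}^{k(c)^\prime-1}\delta_g$ telescopes, a chained sequence contributes $\sum_{g\in G}\delta_g$, where $G\subseteq\{1,\dots,r-1\}$ is the set of ``gaps'' covered by its intervals; conversely the maximal runs of any such $G$ recover the intervals. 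Thus the set of Ulrich subscripts is contained in $\big\{\,n-1-\sum_{g\in G}\delta_g : G\subseteq\{1,\dots,r-1\}\,\big\}$, and $\sfN$ is at most the number of distinct subset sums of the $r-1$ positive integers $\delta_1,\dots,\delta_{r-1}$.

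The upper bound $\sfN\le 2^{r-1}$ is then immediate, since $\{1,\dots,r-1\}$ has only $2^{r-1}$ subsets. For the lower bound $\sfN\ge r$ I would run over the nested subsets $\emptyset\subsetneq\{1\}\subsetneq\{1,2\}\subsetneq\cdots\subsetneq\{1,\dots,r-1\}$: because every $\delta_g$ is strictly positive, the corresponding sums $0<\delta_1<\delta_1+\delta_2<\cdots<\delta_1+\cdots+\delta_{r-1}$ are strictly increasing, so the modules $M_{n-1},M_{n-1-\delta_1},\dots,M_{n-1-(\delta_1+\cdots+\delta_{r-1})}$ are pairwise distinct and Ulrich. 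Each of these subsets is a single run, realized by the single good pair $\big(i_1,i_j\big)$ for $j=2,\dots,r$, together with the empty sequence.

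The step that needs real care — and which I expect to be the main obstacle — is the interface with $\sfU$ in the lower bound: I must confirm, from the definition~(\ref{def_U}) of good pairs and the forward direction of Theorem~\ref{main2}, that the chosen configurations are genuinely admissible, in particular that the pairs $\big(i_1,i_j\big)$ (or some substitute chain of good pairs realizing $r$ strictly increasing sums) lie in $\sfU$. For the upper bound no such subtlety arises, since any chained sequence of good pairs is in particular a family of disjoint separated intervals and hence contributes a subset sum of $\{\delta_1,\dots,\delta_{r-1}\}$, so the bound $2^{r-1}$ holds regardless of which pairs happen to be good. Finally I would observe that the two bounds are sharp exactly as for distinct subset sums of $r-1$ positive integers (all $\delta_g$ equal forces $\sfN=r$, a superincreasing sequence forces $\sfN=2^{r-1}$), in accordance with Corollary~\ref{main_cor2}.
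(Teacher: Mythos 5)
Your proposal is correct, and for the upper bound it takes a genuinely different route from the paper. The paper first observes that the number of admissible chained sequences can only grow as $\sfU$ grows, reduces to the extremal case $\ttI_{n-1}=\{i_1,\dots,i_r\}$ (equivalently $\alpha_u>2$ for $u=2,\dots,r-1$), and then counts the chained sequences in that case by an induction, getting exactly $2^{r-1}-1$. Your gap-set encoding replaces all of this: since a chained sequence is exactly a family of separated intervals in $\{1,\dots,r\}$, every Ulrich subscript is of the form $n-1-\sum_{g\in G}\delta_g$ for some $G\subseteq\{1,\dots,r-1\}$, and since distinct Ulrich modules have distinct subscripts, $\sfN\le 2^{r-1}$ follows at once, uniformly in $\sfU$, with no extremal-case reduction and no induction. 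What the paper's longer count buys is the sharpness clause of its Theorem~\ref{thm_upper} ($\sfN=2^{r-1}$ if and only if $\alpha_u>2$ for all $u=2,\dots,r-1$, and $\sfN=r$ if and only if $\alpha_2=\cdots=\alpha_{r-1}=2$), which is not part of the statement you were asked to prove. Your lower bound is essentially the paper's, and the verification you flagged as the main obstacle is immediate: the displayed computation just before (\ref{def_U}) gives $\ttI_{n-1}=\{i_1\}\cup\{i_s \mid \alpha_s>2,\ 2\le s\le r\}$, so $i_1\in\ttI_{n-1}$ always, hence $(i_1,i_j)\in\sfU$ for every $j=2,\dots,r$; these $r-1$ singleton chains (the chaining condition is vacuous for $b=1$) together with $M_{n-1}$ give $r$ pairwise distinct Ulrich modules, since $0<i_1-i_2<\cdots<i_1-i_r$. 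One caution on your closing sentence: from $i_{u+1}=\alpha_u i_u-i_{u-1}$ one gets $\delta_{u-1}-\delta_u=(\alpha_u-2)i_u\ge 0$, so the sequence $(\delta_g)$ is always non-increasing --- it is never superincreasing --- and sharpness of either bound is controlled by membership in $\ttI_{n-1}$ (i.e.\ by the $\alpha_u$), not by generic subset-sum collisions; but that remark lies outside the stated theorem and does not affect the validity of your proof.
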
 


The remainder of this paper is organized as follows. 
First, we introduce the notion of special CM modules in Section~\ref{cyclic_case}. 
In addition, we introduce the Auslander-Reiten quiver. 
This oriented graph visualizes the relations between MCM modules, and plays an important role in characterizing Ulrich modules. 
For cyclic cases, special CM modules are characterized using purely combinatorial data. 
Using such data, we describe characterizations of Ulrich modules over 
cyclic quotient surface singularities in Section~\ref{main_sec}. 
In Section~\ref{further_topics}, we consider related topics. In particular, we investigate the number of indecomposable Ulrich modules. 

\subsection*{Notations}
Throughout this paper, we assume that $\Bbbk$ is an algebraically closed field. 
Since our focus is on Ulrich modules (with respect to $\fkm$), we employ the notation $\rme(M)\coloneqq \rme_\fkm(M)$, for simplicity. 
We denote the $R$-dual (resp. the canonical dual) functor by $(-)^*\coloneqq\Hom_R(-,R)$ (resp. $(-)^\dagger\coloneqq\Hom_R(-,\omega_R)$).  
Furthermore, we denote the first syzygy functor by $\Omega(-)$. 
We denote the category of MCM modules by $\CM(R)$, and 
the full subcategory consisting of direct summands of finite direct sums of copies of $M$ by $\add_R(M)$. 

\section{Preliminaries}
\label{cyclic_case} 

In this section, we review some known results concerning cyclic quotient surface singularities. 
Thus, we suppose that $G$ is a cyclic group: 
\[
G\coloneqq \langle\;\sigma=
    \begin{pmatrix} \zeta_n&0 \\
                    0&\zeta_n^a 
    \end{pmatrix}           \;\rangle,
\]
where $\zeta_n$ is a primitive $n$-th root of unity with $1\le a\le n-1$, and $\mathrm{gcd}(a,n)=1$. 
We assume that $n$ is invertible in $\Bbbk$. We denote the cyclic group $G$ by $\frac{1}{n}(1,a)$.
Let $S\coloneqq \Bbbk[[x,y]]$ be a power series ring. We denote the invariant subring of $S$ under the action of $G$ by $R\coloneqq S^G$. 
Since $G$ is an abelian group, every irreducible representation of $G$ is one dimensional, and can be described as 
\[
 V_t:\sigma\mapsto\zeta_n^{-t} \quad(t=0,1,\cdots,n-1).
\]
We define 
\[
 M_t\coloneqq(S\otimes_\Bbbk V_t)^G=\Bigl<x^iy^j\;\Big|\;i+ja\equiv t\;\;(\mathrm{mod}\;n)\Bigl>,\;\;(t=0,1,\cdots,n-1).
\]
Then, each $M_t$ is an MCM $R$-module, and $M_s\not\cong M_t$ if $s\neq t$. 
It is well known that $R$ is of finite CM representation type, 
and the modules $M_t$ define all indecomposable MCM modules over $R$, and $\rank_R M_t=1$.

\subsection{Special Cohen-Macaulay modules}
Next, we introduce the notion of special CM modules. 
As we will see in Theorem~\ref{cyclic_special}, every non-free special CM module is minimally two-generated. 
Thus, special CM modules represent the opposite case to Ulrich modules concerning the number of minimal generators. 
However, they will play a crucial role when we characterize Ulrich modules. Here, we recall the definition of special CM modules.

\begin{dfn}[\cite{Wun2}]
\label{def_special}
For an MCM $R$-module $M$, we say that $M$ is special if $(M\otimes_R\omega_R)\big/\tor$ is also an MCM $R$-module.
\end{dfn} 

We will see in Theorem~\ref{special_mckay} that there is a one-to-one correspondence between 
non-free indecomposable special CM $R$-modules and irreducible exceptional curves in the minimal resolution of $\Spec R$. 
Moreover, this constitutes a generalization of the classical McKay correspondence, because every MCM module is special 
if $R$ is Gorenstein (i.e., $G\subset\SL(2,\Bbbk)$ \cite{Wat}). 
Furthermore, there exists a further characterization of special CM modules, as we shall see in Proposition~\ref{char_special}. 
For further details regarding special CM modules, we refer the reader to the references \cite{Wun1, Wun2, Ish, Ito, IW, Rie}. 

\begin{prop}
\label{char_special}
{\rm $($\cite[2.7 and 3.6]{IW}$)$}
Suppose that $M$ is an MCM $R$-module. 
Then, $M$ is a special CM module if and only if it satisfies $(\Omega M)^*\cong M$.
\end{prop}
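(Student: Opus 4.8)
The plan is to run the argument through the two duality functors $(-)^*=\Hom_R(-,R)$ and $(-)^\dagger=\Hom_R(-,\omega_R)$ on $\CM(R)$, using that $R$ is a two–dimensional normal (hence CM) domain with an isolated singularity: thus MCM $=$ reflexive, $(-)^\dagger$ is a duality on $\CM(R)$ with $(-)^{\dagger\dagger}\cong\mathrm{id}$, and $\Ext^i_R(M,\omega_R)=0$ for all $i>0$ when $M\in\CM(R)$. (For $M$ free both sides are read in the stable category, so I take $M$ non-free.) The engine is the adjunction identity
\[
\bigl((M\otimes_R\omega_R)/\tor\bigr)^\dagger\;\cong\;\Hom_R(M\otimes_R\omega_R,\omega_R)\;\cong\;\Hom_R\bigl(M,\Hom_R(\omega_R,\omega_R)\bigr)\;\cong\;M^*,
\]
where the first isomorphism holds because $\omega_R$ is torsion-free (so every map out of the torsion vanishes), and the last because $\Hom_R(\omega_R,\omega_R)\cong R$ for a normal domain. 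Writing $N:=(M\otimes_R\omega_R)/\tor$, which is torsion-free, this yields $N^{\dagger\dagger}\cong(M^*)^\dagger$; since the biduality map $N\hookrightarrow N^{\dagger\dagger}$ is injective with finite-length cokernel (the singular locus is just $\{\fkm\}$), I conclude that $M$ is special, i.e.\ $N$ is MCM, \emph{if and only if} $N\cong(M^*)^\dagger$.

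Next I would dualize the syzygy sequence. Since $\Omega M$ is MCM by the depth lemma, applying $(-)^*$ to $0\to\Omega M\to R^{b_0}\to M\to 0$ gives the four–term exact sequence
\[
0\to M^*\to (R^{b_0})^*\to(\Omega M)^*\to\Ext^1_R(M,R)\to 0,
\]
where $\Ext^1_R(M,R)$ has finite length because $M$ is locally free on the punctured spectrum. Hence $(\Omega M)^*$ is the reflexive hull of $\Coker\bigl(M^*\to(R^{b_0})^*\bigr)$, the finite-length defect being exactly $\Ext^1_R(M,R)$. The problem then reduces to the single homological statement that $M$ is special if and only if $\Ext^1_R(M,R)=0$, together with an on-the-nose identification of $\Coker\bigl(M^*\to(R^{b_0})^*\bigr)$ with $M$ once this defect vanishes.

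For the homological equivalence I would tensor the same sequence with $\omega_R$ and track the torsion: via local duality on the isolated singularity the torsion of $M\otimes_R\omega_R$ is governed by $\Ext^1_R(M,R)$, so that $N$ is MCM precisely when $\Ext^1_R(M,R)=0$; combined with the first paragraph this gives $M$ special $\Leftrightarrow\Ext^1_R(M,R)=0$. Finally, when $\Ext^1_R(M,R)=0$ the dualized sequence collapses to $0\to M^*\to (R^{b_0})^*\to(\Omega M)^*\to 0$, which is the $(-)^*$–dual of the defining presentation of $M$; reading ranks — every indecomposable $M_t$ here has $\rank_R M_t=1$, so $(\Omega M)^*\cong M$ forces $\Omega M$ to have rank one, i.e.\ $M$ to be two-generated — and invoking reflexivity then pins down $(\Omega M)^*\cong M$. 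I expect the \emph{main obstacle} to be exactly this last identification: controlling $\Coker\bigl(M^*\to(R^{b_0})^*\bigr)$ precisely (equivalently, proving $\Omega M\cong M^*$ for special $M$), which is the point where normality, the isolated singularity (finiteness of the defect), and the rank-one structure of the $M_t$ must all be used together, rather than in any single formal manipulation.
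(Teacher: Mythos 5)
You should first note that the paper contains no proof of this proposition: it is quoted from Iyama--Wemyss \cite[2.7 and 3.6]{IW}, so your argument has to stand entirely on its own. Its first half does: the identification $\bigl((M\otimes_R\omega_R)/\tor\bigr)^\dagger\cong M^*$, the consequence that $M$ is special if and only if the inclusion of $N=(M\otimes_R\omega_R)/\tor$ into $(M^*)^\dagger$ is an isomorphism, and the four-term sequence $0\to M^*\to (R^{b_0})^*\to(\Omega M)^*\to\Ext^1_R(M,R)\to 0$ are all correct. But the two statements you then reduce to are exactly the content of the cited results, and neither is proved. For ``$M$ special $\Leftrightarrow\Ext^1_R(M,R)=0$'' your stated mechanism is off target: the torsion of $M\otimes_R\omega_R$ is $H^0_\fkm$, which is harmless; what must vanish for $N$ to be MCM is $H^1_\fkm(M\otimes_R\omega_R)$, and matching its Matlis dual with $\Ext^1_R(M,R)$ requires the derived adjunction $\mathbf{R}\Hom_R(M\otimes^{\mathbf{L}}_R\omega_R,\omega_R)\simeq\mathbf{R}\Hom_R(M,R)$ together with the check that the finite-length modules $\Tor^R_{\ge 1}(M,\omega_R)$ contribute nothing in that degree. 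This part is repairable, but it is a genuine argument, not a remark.

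The fatal gap is the other half. The implication $\Ext^1_R(M,R)=0\Rightarrow(\Omega M)^*\cong M$ is never established---your last paragraph is circular, since it deduces that $\Omega M$ has rank one from the very isomorphism $(\Omega M)^*\cong M$ being proved---and the converse direction $(\Omega M)^*\cong M\Rightarrow M$ special, which your reduction does not even cover, is missing entirely. Moreover, no argument of the kind you are running (normal $+$ CM $+$ isolated singularity $+$ canonical duality) can close this gap, because in that generality the proposition is false: for the cone $R=\Bbbk[[x,y,z]]/(x^3+y^3+z^3)$ over an elliptic curve, $\omega_R\cong R$, so every MCM module is special in Wunram's sense and satisfies $\Ext^1_R(M,R)=0$, yet a rank-one MCM module with three minimal generators (the section module of a nontrivial degree-zero line bundle) has $(\Omega M)^*$ of rank two, hence $(\Omega M)^*\not\cong M$. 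Rationality of the singularity---in the present paper, concretely, Krull--Schmidt plus the fact that every indecomposable MCM module has rank one---must therefore enter precisely at the step you leave open. It can: if $\Ext^1_R(M,R)=0$, a splitting $\Omega M\to R$ would extend over $R^{b_0}$, contradicting $\Omega M\subseteq\fkm R^{b_0}$, so $\Omega M$ is a direct sum of $\mu_R(M)-1$ non-free rank-one modules, each needing two generators, while the resulting short exact sequence forces $\mu_R((\Omega M)^*)\le\mu_R(M)$; this yields $\mu_R(M)=2$, and a determinant (class-group) computation on $0\to\Omega M\to R^2\to M\to 0$ then pins down $\Omega M\cong M^*$, i.e.\ $(\Omega M)^*\cong M$. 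Nothing of this sort---nor any substitute using the geometry of the minimal resolution, as in [IW] or [Wun2]---appears in your proposal, which is why the obstacle you yourself flag at the end remains untouched.
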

 
For a cyclic group $G=\frac{1}{n}(1,a)$, we can determine special CM modules by using the following combinatorial data. 
For the first step, we consider the Hirzebruch-Jung continued fraction expansion of $n/a$: 
\[
 \frac{n}{a}=\alpha_1-\cfrac{1}{\alpha_2-\cfrac{1}{\cdots -\frac{1}{\alpha_r}}}\coloneqq[\alpha_1,\alpha_2,\cdots,\alpha_r].
\]
Then, we define the notion of the $i$-series and $j$-series (see \cite{Wem, Wun1}).

\begin{dfn}
For $n/a=[\alpha_1,\alpha_2,\cdots,\alpha_r]$, the $i$-series and $j$-series are defined as follows:
\begin{eqnarray*}
 i_0=n,\;\;i_1=a,\;\;&i_t=\alpha_{t-1}i_{t-1}-i_{t-2}&\;\;(t=2,\cdots,r+1), \\
 j_0=0,\;\;j_1=1,\;\;&j_t=\alpha_{t-1}j_{t-1}-j_{t-2}&\;\;(t=2,\cdots,r+1).
\end{eqnarray*}
\end{dfn}

By using the $i$-series and $j$-series, we can characterize special CM $R$-modules.

\begin{thm}[\cite{Wun1}]
\label{cyclic_special}
For a cyclic group $G=\frac{1}{n}(1,a)$ with $n/a=[\alpha_1,\alpha_2,\cdots,\alpha_r]$, 
$M_{i_t}\;(t=1,\cdots,r)$ and $R$ are precisely special CM modules over $R$. 
Furthermore, the minimal generators of $M_{i_t}$ are $x^{i_t}$ and $y^{j_t}$, for $t=1,\cdots,r$. 
\end{thm}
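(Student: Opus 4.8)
The plan is to reduce the whole statement to a single combinatorial fact about the sequence $am\bmod n$ and then feed that into an explicit syzygy computation. Throughout I would write $s_t\coloneqq ta^{-1}\bmod n\in[0,n-1]$, so that $y^{s_t}$ is the monomial of $M_t$ on the $y$-axis of smallest exponent and $x^t$ the one on the $x$-axis. For $t\neq 0$ both $x^t$ and $y^{s_t}$ are always minimal generators (neither is divisible inside $M_t$ by a nonconstant invariant monomial, since $0<t<n$ and $0<s_t<n$), so $\mu_R(M_t)\ge 2$; moreover every monomial $x^py^q\in M_t$ with $p\ge t$ lies in $Rx^t$ (the cofactor $x^{p-t}y^q$ is invariant) and every one with $q\ge s_t$ lies in $Ry^{s_t}$. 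Hence $\mu_R(M_t)=2$ if and only if the open box $[0,t)\times[0,s_t)$ contains no exponent vector of $M_t$, i.e. no $(p,q)$ with $0\le p<t$, $0\le q<s_t$, and $p+qa\equiv t\pmod n$.

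First I would establish the bridge ``special $\iff\mu_R=2$'' for non-free $M_t$. For the forward direction, a minimal presentation $0\to\Omega M_t\to R^{\mu}\to M_t\to 0$ gives $\rank\Omega M_t=\mu-1$; since $R$ is a domain and, by Proposition~\ref{char_special}, specialness forces $(\Omega M_t)^*\cong M_t$ and hence $\rank\Omega M_t=\rank M_t=1$, we get $\mu=2$. For the converse, assume $\mu_R(M_t)=2$, so the two generators are exactly $x^t$ and $y^{s_t}$. Mapping $R^2\to M_t$ by $(f,g)\mapsto fx^t+gy^{s_t}$ and computing the kernel inside the UFD $S$: a relation $fx^t+gy^{s_t}=0$ forces, by coprimality of $x^t,y^{s_t}$, that $f=y^{s_t}h$ and $g=-x^th$ with $h\in S$, and the conditions $f,g\in R$ translate exactly to $h\in M_{n-t}$. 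This identifies $\Omega M_t\cong M_{n-t}$; combined with the twist $M_s^*\cong M_{n-s}$ (dualizing sends $V_s$ to $V_{-s}$), we obtain $(\Omega M_t)^*\cong M_{n-t}^*\cong M_t$, so $M_t$ is special by Proposition~\ref{char_special}. Since $R$ is special directly from the definition, the theorem reduces to identifying the set $\{t\in[1,n-1]:\mu_R(M_t)=2\}$ together with the generators in that case.

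Next I would carry out the combinatorial identification. Using $t\equiv as_t$, a box element $(p,q)$ satisfies $p\equiv a(s_t-q)\pmod n$; writing $m=s_t-q\in[1,s_t-1]$, emptiness of the box becomes the assertion that $am\bmod n\ge t$ for all $1\le m<s_t$. Equivalently, $t$ occurs as a strict record minimum of the sequence $r_m\coloneqq am\bmod n$ $(m=1,2,\dots)$, attained at $m=s_t$; the residues $r_1,\dots,r_{s_t}$ are pairwise distinct because $\gcd(a,n)=1$, so records are automatically strict. It remains to show these record minima are exactly $i_1>\cdots>i_r$, attained at $m=j_1<\cdots<j_r$ (which also gives $s_{i_t}=j_t$, so the generators are $x^{i_t}$ and $y^{j_t}$). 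This is the best-approximation property of the Hirzebruch--Jung convergents, which I would prove by induction on $t$ using the recursions $i_{t+1}=\alpha_t i_t-i_{t-1}$, $j_{t+1}=\alpha_t j_t-j_{t-1}$ together with the determinant identity $i_{t-1}j_t-i_tj_{t-1}=n$ (constant in $t$, since both series obey the same recursion and the value is read off at $t=1$).

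The main obstacle is this last step: everything on the module side is formal once $\mu_R(M_t)=2$ is pinned down, but proving that the record minima of $am\bmod n$ are precisely the $i$-series attained at the $j$-series is the genuine content, and is essentially Wunram's computation repackaged. The induction is delicate because one must simultaneously control how far $r_m$ stays above $i_t$ across the whole interval $j_{t-1}<m<j_t$ and verify the exact equality $r_{j_t}=i_t$ at the endpoint; the determinant identity $i_{t-1}j_t-i_tj_{t-1}=n$ is the bookkeeping device that forces $(i_t,0)$ and $(0,j_t)$ to be Farey-adjacent and thereby keeps the induction closed.
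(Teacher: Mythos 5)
The paper never proves Theorem~\ref{cyclic_special} at all: it is imported wholesale from Wunram \cite{Wun1}, whose original argument is geometric, running through the minimal resolution (the sheaves $\widetilde{M_t}$, Chern classes, and $\rmH^1$-vanishing, in the spirit of Theorem~\ref{special_mckay}). Your route is therefore necessarily different, and as far as I can check it is correct in substance: you replace the geometry by the Iyama--Wemyss criterion (Proposition~\ref{char_special}, which the paper also quotes without proof, so no circularity with Theorem~\ref{main} arises, since the paper's proofs of Theorem~\ref{main} use Theorem~\ref{cyclic_special} but not conversely); you prove ``special $\iff \mu_R=2$'' for non-free $M_t$ by the rank count on a minimal presentation in one direction and by the explicit kernel computation $\Omega M_t\cong M_{n-t}$, $(\Omega M_t)^*\cong M_{n-t}^*\cong M_t$ in the other (the coprimality argument in the UFD $S$ and the weight bookkeeping $as_t\equiv t$ are both fine); and you then reduce the whole theorem to the classical fact that the strict record minima of $am\bmod n$ are exactly $i_1>\cdots>i_r$, attained at $m=j_1<\cdots<j_r$. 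What your approach buys is a resolution-free, purely algebraic--combinatorial proof; what it forgoes, compared with Wunram, is the simultaneous geometric output (the bijection with exceptional curves and $\rmc_1(\widetilde{M_i})\cdot E_j=\delta_{ij}$ of Theorem~\ref{special_mckay}), which would have to be imported separately if wanted.

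Two spots need tightening, neither fatal. First, in the equivalence ``$\mu_R(M_t)=2$ iff the box $[0,t)\times[0,s_t)$ contains no exponent of $M_t$'' you only argue the ``if'' direction; for ``only if'', take a box monomial minimal under divisibility among box monomials and observe, by the multigraded Nakayama argument, that any factorization (nonconstant invariant)$\times$(monomial of $M_t$) would produce a strictly smaller box monomial, so it is a third minimal generator. Second, your closing sentence about $(i_t,0)$ and $(0,j_t)$ being ``Farey-adjacent'' is garbled; the statement you actually need is that consecutive convergent vectors $(j_t,i_t)$ and $(j_{t+1},i_{t+1})$ form a $\ZZ$-basis of the congruence lattice $L=\{(q,p)\in\ZZ^2\;|\;p\equiv aq \pmod n\}$, because $i_tj_{t+1}-i_{t+1}j_t=n=[\ZZ^2:L]$ (constant by your recursion argument, with value read off at $t=0$; the same argument gives $aj_t\equiv i_t\pmod n$, hence $r_{j_t}=i_t$ and $s_{i_t}=j_t$). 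Granting this, the induction does close: writing any $(m,p)\in L$ with $j_t<m<j_{t+1}$ and $0\le p<i_t$ as $u(j_t,i_t)+v(j_{t+1},i_{t+1})$ and running through the sign cases for $(u,v)$, the determinant identity forces either $p\ge i_t$ or $m\notin(j_t,j_{t+1})$; combined with $r_{j_{t+1}}=i_{t+1}<i_t$ (which follows since $\alpha_t=\lceil i_{t-1}/i_t\rceil$ makes the $i$-series a strictly decreasing nonnegative remainder sequence), the next record after $j_t$ is exactly $(j_{t+1},i_{t+1})$. With these repairs your proof is complete and self-contained modulo Proposition~\ref{char_special}.
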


As we mentioned above, special CM modules are compatible with the geometrical structures. 
Thus, we introduce some terminologies on the geometric side, and discuss a relationship between   
special CM modules and geometrical objects. 

Let $\pi:X\rightarrow\Spec\,R$ be the minimal resolution of singularities, and $E\coloneqq\pi^{-1}(\fkm)$ be the exceptional divisor. 
We decompose $E=\bigcup^r_{t=1}E_{i_t}$ into irreducible components, 
and define a cycle $Z=\sum^r_{t=1}a_{i_t}E_{i_t}$ with $a_{i_t}\in\ZZ$. 
For cycles $Z, Z^\prime$, we denote the intersection number of $Z$ and $Z^\prime$ by $Z\cdot Z^\prime$. 
If $Z=Z^\prime$, then the self-intersection number of $Z$ is denoted by $Z^2$.  
We say that a cycle $Z$ is positive if $a_{i_t}>0$ for all $i_t$, 
and say that a positive cycle $Z$ is anti-nef if $Z\cdot E_{i_t}\le 0$ for all $i_t$. 
We define the fundamental cycle $Z_0$ as the unique smallest element in the set of anti-nef cycles. 
An algorithm exists to determine $Z_0$ (see \cite{Lau}), and the fundamental cycle is $Z_0=\sum^r_{t=1}E_{i_t}$ in our situation. 

\medskip

The following is a famous result known as the special McKay correspondence. 

\begin{thm}[\cite{Wun2}]
\label{special_mckay}
For any $i_t$, there exists a unique indecomposable MCM $R$-module $M_{i_t}$ $($up to isomorphism$)$ 
such that $\rmH^1(\widetilde{M_{i_t}}^\vee)=0$ and $\rmc_1(\widetilde{M_{i_t}})\cdot E_{i_s}=\delta_{st}$ for $1\le s, t  \le r$, 
where $\widetilde{M_{i_t}}=\pi^*(M_{i_t})/\tor$, $\rmc_1(\widetilde{M_{i_t}})$ denotes the first Chern class of $\widetilde{M_{i_t}}$, 
and $(-)^\vee=\calH\!om_{\calO_X}(-, \calO_X)$. 
These MCM modules $M_{i_1}, \cdots, M_{i_r}$ are precisely indecomposable non-free special CM modules, 
and $\rank_RM_{i_t}=\rmc_1(\widetilde{M_{i_t}})\cdot Z_0$. 
\end{thm}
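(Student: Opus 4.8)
The plan is to transport the whole statement to the minimal resolution $\pi\colon X\to\Spec R$ and to reduce it to the classification of certain locally free sheaves by their first Chern classes. The basic dictionary is the theory of full sheaves (due to Esnault, and the framework underlying \cite{Wun1, Wun2}): since a quotient singularity is rational, the assignments $M\mapsto\widetilde{M}=\pi^*M/\tor$ and $\calF\mapsto\pi_*\calF$ are mutually inverse bijections between isomorphism classes of indecomposable MCM $R$-modules and indecomposable \emph{full sheaves} on $X$, i.e.\ locally free sheaves generated by global sections with $\rmH^1(\calF)=0$. Under this correspondence $\rank_R M=\rank\widetilde{M}$ and $\pi_*\widetilde{M}\cong M$, and $\rmc_1(\widetilde{M})$ is represented by a divisor supported on $E$, so the numbers $\rmc_1(\widetilde{M})\cdot E_j$ are well defined integers. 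First I would record this dictionary and the fact that, because $\widetilde M$ is globally generated, its determinant is nef along $E$, whence $\rmc_1(\widetilde M)\cdot E_j\ge 0$ for all $j$.

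Next I would identify the special modules on the geometric side. By Proposition~\ref{char_special} a module is special exactly when $(\Omega M)^*\cong M$, and the task is to match this with the cohomological condition $\rmH^1(\widetilde{M}^\vee)=0$ appearing in the statement. I would do this by translating a minimal free presentation of $M$ into an exact sequence of full sheaves, applying $\calH\!om_{\calO_X}(-,\calO_X)$, and reading the obstruction to the isomorphism $(\Omega M)^*\cong M$ as the vanishing of $\rmH^1$ of the dual; this equivalence is exactly the content recorded in \cite{IW} and gives $\rmH^1(\widetilde{M}^\vee)=0$ its meaning as the special condition.

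The central step is the classification of indecomposable non-free specials by Chern data. Since the intersection form $(E_i\cdot E_j)$ is negative definite (minimality of $\pi$), it is nondegenerate, so for each $i$ there is a unique $\QQ$-divisor $D_i$ with $D_i\cdot E_j=\delta_{ij}$; the claim is that each standard basis vector $e_i=(\delta_{ij})_j\in\ZZ^r$ is the Chern datum of exactly one indecomposable special, and that these exhaust the indecomposable non-free specials. For \textbf{existence} I would produce, for each $i$, a special full sheaf with $\rmc_1\cdot E_j=\delta_{ij}$; in the cyclic case every full sheaf is a line bundle (all $M_t$ have rank one), so one simply takes the tautological line bundle attached to the irreducible representation, while in general one passes to the special representative of a suitable line bundle, which may force $\rank>1$. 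For \textbf{uniqueness}, and for the fact that the only Chern data occurring are the $e_i$, I would argue by rigidity: given two specials $\calF,\calG$ with equal intersection numbers, Riemann--Roch on $X$ (\cite{Kat}) together with the vanishing $\rmH^1(\calF^\vee)=0$ forces a nonzero element of $\Hom(\calF,\calG)$, which by global generation and the coincidence of Chern data must be an isomorphism. This rigidity is the \textbf{main obstacle}: one must exploit the special vanishing to annihilate the error terms in $\chi(\calF^\vee\otimes\calG)$ so that a homomorphism exists and is forced to be invertible, and the same mechanism shows an indecomposable special cannot have Chern datum other than a single $e_i$.

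Finally, the rank formula $\rank_R M_i=\rmc_1(\widetilde{M_i})\cdot Z_0$ I would deduce cleanly from the special condition. From the minimal presentation $0\to\Omega M_i\to R^{\mu}\to M_i\to 0$ and additivity of rank, $\mu_R(M_i)=\rank_R M_i+\rank_R\Omega M_i$; since $(\Omega M_i)^*\cong M_i$ and $R$-duality preserves rank, $\rank_R\Omega M_i=\rank_R M_i$, so $\mu_R(M_i)=2\,\rank_R M_i$. On the other hand, the Artin--Verdier type generator formula for full sheaves over a rational singularity gives $\mu_R(M_i)=\rank\widetilde{M_i}+\rmc_1(\widetilde{M_i})\cdot Z_0$ (obtained by restricting to $Z_0$ and using $\chi(\calO_{Z_0})=1$ together with $\rmH^1(\widetilde{M_i})=0$). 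Comparing the two yields $\rmc_1(\widetilde{M_i})\cdot Z_0=\rank_R M_i$. As a consistency check, in the cyclic case $\rank_R M_i=1$ and $Z_0=\sum_t E_{i_t}$, so $\rmc_1(\widetilde{M_i})\cdot Z_0=\sum_j\delta_{ij}=1$, matching the rank and confirming Theorem~\ref{cyclic_special}'s count of two minimal generators.
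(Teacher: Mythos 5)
First, a point of framing: the paper does not prove this statement at all --- it is Wunram's special McKay correspondence, quoted with the citation \cite{Wun2} --- so there is no internal argument to compare yours against; your proposal has to stand on its own as a proof of Wunram's theorem, and it does not. The most concrete gap is in your foundational dictionary. A locally free sheaf on $X$ that is globally generated and has $\rmH^1=0$ need \emph{not} be a full sheaf: Esnault's characterization requires a fourth condition, $\rmH^1(\calF^\vee\otimes\omega_X)=0$ (equivalently $\rmH^1_E(\calF)=0$, i.e.\ sections of $\calF$ over $X\setminus E$ extend across $E$). This is not pedantry. For instance $\fkm\calO_X=\calO_X(-Z_0)$ is locally free, globally generated and has $\rmH^1(\calO_X(-Z_0))=0$ by rationality, yet $\pi_*\calO_X(-Z_0)=\fkm$ is not reflexive (its reflexive hull is $R$), so $\calO_X(-Z_0)$ is not $\widetilde{M}$ for any MCM module $M$; one can also check its degree vector violates the admissibility conditions of Lemma~\ref{key_lem1}. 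Since your existence step consists precisely of exhibiting a line bundle with degree vector $e_i$ (or ``the tautological line bundle'') and then invoking the dictionary, the missing condition breaks that step: you cannot conclude your candidate is the full sheaf of an MCM module, you never identify \emph{which} tautological bundle has degree vector $e_i$ (that identification is the continued-fraction computation of \cite{Wun1}), and you never verify the specialness $\rmH^1(\widetilde{M}^\vee)=0$ of the candidate --- which is not automatic, as in the cyclic case only $r+1$ of the $n$ indecomposable MCM modules are special.

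The second gap is that the central step --- uniqueness, and above all the exhaustion claim that every indecomposable non-free special has Chern datum equal to some $e_i$ --- is a declaration of strategy rather than an argument; you yourself label the rigidity computation ``the main obstacle.'' On the non-compact surface $X$ the Euler characteristic $\chi(\calF^\vee\otimes\calG)$ is not defined without a relative (Kato-type) normalization, so ``Riemann--Roch forces a nonzero homomorphism'' is not yet a proof; and the assertion that an indecomposable special cannot have $\rmc_1\cdot E_j\ge 2$, or positive intersection with two distinct $E_j$, requires actually producing a direct-sum decomposition of such a sheaf --- this is the genuinely hard part of Wunram's proof and is dispatched in one clause. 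There is also a circularity risk in leaning on Proposition~\ref{char_special}: the geometric half of that equivalence in \cite{IW} is itself built on Wunram's correspondence. On the positive side, your final paragraph is sound: granting the classification, deducing $\rank_RM_i=\rmc_1(\widetilde{M_i})\cdot Z_0$ by comparing $\mu_R(M_i)=2\rank_RM_i$ (from $(\Omega M_i)^*\cong M_i$) with the generator formula $\mu_R(M_i)=\rank_RM_i+\rmc_1(\widetilde{M_i})\cdot Z_0$ is a clean argument --- but it presupposes everything that remains unproved.
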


By this theorem, there exists a one-to-one correspondence between non-free indecomposable special CM modules and 
irreducible exceptional curves. 
The dual graph of the minimal resolution of singularity $X\rightarrow\Spec(R)$ can also be obtained by the Hirzebruch-Jung continued fraction expansion: 
\[\scalebox{0.8}{
\begin{tikzpicture}
\node (A1) at (1,0){$-\alpha_1$};
\node (A2) at (3,0){$-\alpha_2$}; 
\node (Ar) at (7,0){$-\alpha_r$}; 

\node (E1) at (1,0.7){$E_{i_1}$};
\node (E2) at (3,0.7){$E_{i_2}$}; 
\node (Er) at (7,0.7){$E_{i_r}$}; 

\draw  [thick] (A1) circle [radius=0.38] ;
\draw  [thick] (A2) circle [radius=0.38] ;
\draw  [thick] (Ar) circle [radius=0.38] ;
\draw (A1)--(A2) ; \draw (A2)--(4.3,0) ; \draw[thick, dotted] (4.5,0)--(5.5,0); \draw (5.7,0)--(Ar) ;
\end{tikzpicture}
}\]
where each circled number represents the self-intersection number of the corresponding exceptional curve. 

When we consider Ulrich modules, the multiplicity $\rme(M)=(\rank_RM)\rme(R)$ is important. 
It is known that the multiplicity $\rme(R)$ can be computed via the self-intersection number of the fundamental cycle $Z_0$ \cite{Art}. 
That is, we have that $\rme(R)=-Z_0^2=\alpha_1+\cdots+\alpha_r-2(r-1)$.

\begin{ex}
\label{ex12/7}
Let $G=\frac{1}{12}(1,7)$ be a cyclic group of order $12$. 
The Hirzebruch-Jung continued fraction expansion of $12/7$ is 
\[
 \frac{12}{7}=2-\cfrac{1}{4-1/2}=[2,4,2]. 
\]
The $i$-series and the $j$-series are given by
\[
\begin{array}{ccccc}
 i_0=12,&i_1=7,&i_2=2,&i_3=1,&i_4=0, \\
 j_0=0,&j_1=1,&j_2=2,&j_3=7,&j_4=12.
\end{array}
\]
Thus, the special CM modules are $M_7, \,M_2, \,M_1, \,R$, and these take the form  
\[
  M_7=Rx^7+Ry, \quad M_2=Rx^2+Ry^2, \quad  M_1=Rx+Ry^7 .
\]
In this case, the dual graph is 
\[\scalebox{0.8}{
\begin{tikzpicture}
\node (A1) at (1,0){$-2$};
\node (A2) at (3,0){$-4$}; 
\node (A3) at (5,0){$-2$}; 

\node (E1) at (1,0.7){$E_7$};
\node (E2) at (3,0.7){$E_2$}; 
\node (E3) at (5,0.7){$E_1$}; 

\draw  [thick] (A1) circle [radius=0.38] ;
\draw  [thick] (A2) circle [radius=0.38] ;
\draw  [thick] (A3) circle [radius=0.38] ;
\draw (A1)--(A2) ; \draw (A2)--(A3) ;
\end{tikzpicture}
}\]
and the fundamental cycle is $Z_0=E_7+E_2+E_1$. 
Thus, we have the multiplicity $\rme(R)=-Z_0^2=4$. 
\end{ex}

\subsection{Auslander-Reiten theory}
In the previous subsection, we saw that the multiplicity can be computed using the fundamental cycle. 
To determine Ulrich modules, we will also investigate the number of minimal generators. 
In Section~\ref{main_sec}, we will use the Auslander-Reiten quiver to understand minimal generators. 
Moreover, by applying the functor $\tau$, which is called the Auslander-Reiten translation, to a special CM module, 
we can obtain an Ulrich module (see Proposition~\ref{Ulrich_Rdual}). 
Thus, in this subsection we will present some results from Auslander-Reiten theory. 
Although we will mainly discuss the case of a cyclic quotient surface singularity $R$, 
we can obtain similar results for any quotient surface singularities. 
For more details, see \cite{LW,Yo}. 

\begin{dfn}
Let $M$ and $N$ be indecomposable MCM $R$-modules.
We call a non-split short exact sequence 
$0\rightarrow N\overset{f}{\rightarrow}L\overset{g}{\rightarrow}M\rightarrow 0$ 
the Auslander-Reiten (= AR) sequence ending in $M$ if for any MCM module $X$ and any morphism $\varphi:X\rightarrow M$ 
that is not a split surjection, there exists $\phi:X\rightarrow L$ such that $\varphi=g\circ\phi$.
\end{dfn}

Since $R$ is an isolated singularity, the AR sequence ending in $M$ exists 
for any non-free indecomposable MCM $R$-module $M$, and is unique up to isomorphism \cite{Aus2}. 
We can construct the AR sequence by using the Koszul complex over $S$ and a natural representation of $G$ 
(see, e.g., \cite[Chapter~10]{Yo}). 
In our situation, the following is the AR sequence ending in $M_t\;(t\neq 0)$: 
\begin{equation}
\label{AR_M}
0\longrightarrow M_{t-a-1}\longrightarrow M_{t-1}\oplus M_{t-a}\longrightarrow M_t\longrightarrow 0.
\end{equation}
For the case of $t=0$, we also have the fundamental sequence of $R$: 
\begin{equation}
\label{fund_R}
0\longrightarrow\omega_R\longrightarrow M_{-1}\oplus M_{-a}\longrightarrow R\longrightarrow \Bbbk\longrightarrow 0.
\end{equation}
We call the left term of the AR sequence ending in $M_t$ the Auslander-Reiten (AR) translation of $M_t$, and denote this by $\tau(M_t)$. 
It is known that the AR translation $\tau$ can be obtained via the functors 
\[
\tau: \CM(R)\overset{(-)^*}{\longrightarrow}\CM(R)\overset{(-)^\dagger}{\longrightarrow}\CM(R). 
\]
Furthermore, we denote the middle term of an AR sequence by $E_{M_t}$. 
Thus, in our situation we have that $E_{M_t}=M_{t-1}\oplus M_{t-a}$ and $\tau(M_t)=M_{t-a-1}$ for $t=0,1,\cdots,n-1$. 

\medskip

Next, we introduce the notion of the Auslander-Reiten quiver. 

\begin{dfn}
The Auslander-Reiten (= AR) quiver of $R$ is an oriented graph whose vertices are indecomposable MCM $R$-modules $R,M_1,\cdots,M_{n-1}$, 
where we draw $m_{st}$ arrows from $M_s$ to $M_t$ $(s,t=0,1,\cdots,n-1)$, with $m_{st}$ denoting the multiplicity of $M_s$ in the decomposition of $E_{M_t}$. 
\end{dfn}

Since we already know that $E_{M_t}=M_{t-1}\oplus M_{t-a}$, 
it follows that we draw an arrow from $M_{t-1}$ to $M_t$ and from $M_{t-a}$ to $M_t$ for $t=0,1,\cdots,n-1$. 
Furthermore, we see that these arrows correspond to morphisms $\cdot x$ and $\cdot y$, respectively:
\[
M_{t-1}=\big\{\;f\in S\;|\;\sigma\cdot f=\zeta_n^{t-1}\;f\;\big\}\overset{\cdot x}{\longrightarrow}
M_t=\big\{\;f\in S\;|\;\sigma\cdot f=\zeta_n^t\;f\;\big\} 
\]
\[
M_{t-a}=\big\{\;f\in S\;|\;\sigma\cdot f=\zeta_n^{t-a}\;f\;\big\}\overset{\cdot y}{\longrightarrow}
M_t=\big\{\;f\in S\;|\;\sigma\cdot f=\zeta_n^t\;f\;\big\}. 
\] 
Moreover, the AR quiver of $R$ coincides with the McKay quiver of $G$ \cite{Aus1}.

\begin{ex}
\label{ex12/7_AR}
Let $G=\frac{1}{12}(1,7)$ be a cyclic group of order $12$ (see Example~\ref{ex12/7}). 
The AR quiver of $R=S^G$ is the following. For simplicity, we only describe subscripts as vertices. 

\medskip

\begin{center}
\begin{tikzpicture}[>=stealth, scale=.75]
\node (P0) at (90:3cm) {$0$}; 
\node (P1) at (60:3cm) {$1$}; 
\node (P2) at (30:3cm) {$2$}; 
\node (P3) at (0:3cm) {$3$}; 
\node (P4) at (-30:3cm) {$4$}; 
\node (P5) at (-60:3cm) {$5$}; 
\node (P6) at (-90:3cm) {$6$}; 
\node (P7) at (-120:3cm) {$7$}; 
\node (P8) at (-150:3cm) {$8$}; 
\node (P9) at (180:3cm) {$9$}; 
\node (P10) at (150:3cm) {$10$}; 
\node (P11) at (120:3cm) {$11$}; 

\draw[->] (85:3cm)--node [swap,above]{$x$} (65:3cm); 
\draw[->] (55:3cm)--node [swap,above,xshift=1mm] {$x$} (35:3cm); 
\draw[->] (25:3cm)--node [swap,above,xshift=1.8mm, yshift=-1mm] {$x$} (5:3cm); 
\draw[->] (-5:3cm)--node [swap,above,xshift=2.3mm, yshift=-2mm] {$x$} (-25:3cm);
\draw[->] (-35:3cm)--node [swap,below,xshift=1.5mm, yshift=1mm] {$x$} (-55:3cm);  
\draw[->] (-65:3cm)--node [swap,below,xshift=1mm] {$x$} (-85:3cm); 
\draw[->] (-95:3cm)--node [swap,below,xshift=-1mm] {$x$} (-115:3cm); 
\draw[->] (-125:3cm)--node [swap,below,xshift=-1.8mm, yshift=1mm] {$x$} (-145:3cm); 
\draw[->] (-155:3cm)--node [swap,below,xshift=-1.8mm, yshift=1mm] {$x$} (185:3cm); 
\draw[->] (175:3cm)--node [swap,above,xshift=-1.8mm, yshift=-1.5mm] {$x$} (155:3cm); 
\draw[->] (145:3cm)--node [swap,above,xshift=-1.5mm, yshift=-1mm] {$x$} (125:3cm); 
\draw[->] (115:3cm)--node [swap,above,xshift=-1mm] {$x$} (95:3cm); 

\draw[->] (92:2.6cm)--node [swap,above,xshift=1.2cm, yshift=1.3cm] {$y$} (-120:2.65cm); 
\draw[->] (62:2.6cm)--node [swap,above,xshift=1.8cm, yshift=0.5cm] {$y$}  (-150:2.6cm); 
\draw[->] (32:2.6cm)--node [swap,above,xshift=1.9cm, yshift=-0.4cm] {$y$}  (180:2.6cm); 
\draw[->] (2:2.6cm)--node [swap,above,xshift=1.55cm, yshift=-1.35cm] {$y$}  (150:2.6cm); 
\draw[->] (-28:2.6cm)--node [swap,above,xshift=0.85cm, yshift=-2.1cm] {$y$}  (120:2.6cm); 
\draw[->] (-58:2.6cm)--node [swap,above,xshift=-0.2cm, yshift=-2.2cm] {$y$}  (90:2.6cm); 
\draw[->] (-88:2.6cm)--node [swap,below,xshift=-1.1cm, yshift=-1.3cm] {$y$} (60:2.6cm); 
\draw[->] (-118:2.6cm)--node [swap,below,xshift=-1.8cm, yshift=-0.55cm] {$y$} (30:2.6cm); 
\draw[->] (-148:2.6cm)--node [swap,below,xshift=-1.8cm, yshift=0.4cm] {$y$} (0:2.6cm); 
\draw[->] (182:2.6cm)--node [swap,below,xshift=-1.65cm, yshift=1.4cm] {$y$} (-30:2.6cm); 
\draw[->] (152:2.6cm)--node [swap,below,xshift=-0.88cm, yshift=2.1cm] {$y$} (-60:2.6cm); 
\draw[->] (122:2.6cm)--node [swap,below,xshift=0.2cm, yshift=2.2cm] {$y$} (-90:2.6cm); 

\end{tikzpicture}
\end{center}

\end{ex}

\section{Ulrich modules for cyclic cases}
\label{main_sec}

In this section, we will present a characterization of Ulrich modules using special CM modules. 
First, we note the following proposition. 

\begin{prop}
\label{Ulrich_Rdual}
Let the notation be the same as in Section~\ref{cyclic_case}. 
For a non-free special CM $R$-module $M_{i_t}$, the MCM modules $M_{n-i_t}$ and $M_{i_t-a-1}$ are Ulrich modules.  
\end{prop}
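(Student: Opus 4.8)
The plan is as follows. Since every $M_t$ has rank one, $\rme(M_t)=(\rank_R M_t)\,\rme(R)=\rme(R)$, so by Definition~\ref{def_Ulrich} it suffices to prove $\mu_R(M_{n-i_t})=\mu_R(M_{i_t-a-1})=\rme(R)$. First I would identify the two modules through the duality functors. The rank-one reflexive modules $M_s$ realize the divisor class group $\cong\ZZ/n\ZZ$, so $\Hom_R(M_s,M_u)\cong M_{u-s}$; in particular $M_s^*\cong M_{-s}$, and since one checks $\omega_R\cong M_{n-1-a}$ from the action of $\sigma$ on $dx\wedge dy$ (the invariance of $x^iy^j\,dx\wedge dy$ forces $i+ja\equiv n-1-a$), one gets $M_s^\dagger\cong M_{n-1-a-s}$. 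Hence $M_{n-i_t}\cong M_{i_t}^*$, and using $\tau=(-)^\dagger\circ(-)^*$ gives $M_{i_t-a-1}=\tau(M_{i_t})=(M_{i_t}^*)^\dagger=(M_{n-i_t})^\dagger$. Moreover, as $M_{i_t}$ is special, Proposition~\ref{char_special} yields $(\Omega M_{i_t})^*\cong M_{i_t}$; because $\Omega M_{i_t}$ is MCM, hence reflexive, this gives $\Omega M_{i_t}\cong M_{i_t}^*\cong M_{n-i_t}$.

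Next I would reduce the two assertions to a single one by showing that the canonical dual $(-)^\dagger$ preserves the Ulrich property among rank-one MCM modules. Since $\Bbbk$ is infinite, choose a minimal reduction $\fkq=(f,g)\subseteq\fkm$; then $M$ is Ulrich if and only if $\fkm M=\fkq M$, i.e. $M/\fkq M$ is annihilated by $\fkm$. As $M$ and $\omega_R$ are MCM, $f,g$ is a regular sequence on both and $\omega_{R/\fkq}\cong\omega_R/\fkq\omega_R$, so $\Hom_R(M,\omega_R)\otimes_R R/\fkq\cong\Hom_{R/\fkq}(M/\fkq M,\,\omega_{R/\fkq})$. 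Over the Artinian Gorenstein ring $R/\fkq$, a module is killed by $\fkm$ exactly when its $\omega_{R/\fkq}$-dual is; therefore $M$ is Ulrich if and only if $M^\dagger$ is. Thus it is enough to prove that $M_{n-i_t}$ is Ulrich.

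Finally, the core computation. Since $M_{i_t}$ is minimally generated by the two elements $x^{i_t},y^{j_t}$ (Theorem~\ref{cyclic_special}), its first syzygy gives $\mu_R(M_{n-i_t})=\mu_R(\Omega M_{i_t})=\beta_1(M_{i_t})$, the minimal number of relations among $x^{i_t},y^{j_t}$ over $R$. A direct check, using the identity $a\,j_t\equiv i_t\pmod n$, shows that a monomial syzygy coefficient $h=x^py^q$ must satisfy $p+qa\equiv -i_t\pmod n$, which re-identifies the relation module with $M_{n-i_t}$ and keeps the reduction honest. The crux is then to establish $\beta_1(M_{i_t})=\rme(R)=\alpha_1+\cdots+\alpha_r-2(r-1)$. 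I would carry this out either by counting the minimal monomial generators of $M_{n-i_t}=\langle x^py^q\mid p+qa\equiv -i_t\rangle$ along the Newton-boundary staircase attached to the Hirzebruch--Jung expansion of $n/a$, or, following the route advertised in the introduction, by combining the special McKay correspondence (Theorem~\ref{special_mckay}, with $\rmc_1(\widetilde{M_{i_t}})\cdot E_j=\delta_{tj}$) and Kato's Riemann--Roch formula \cite{Kat} to evaluate $\dim_\Bbbk M_{n-i_t}/\fkm M_{n-i_t}$ against the fundamental cycle $Z_0$. This last step---extracting the generator count $\rme(R)$ from the combinatorial or intersection-theoretic data---is the main obstacle; everything preceding it is formal.
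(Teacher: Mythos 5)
Your formal reductions are all correct: the identifications $M_{i_t}^*\cong M_{n-i_t}$, $\omega_R\cong M_{n-1-a}$, $\tau(M_{i_t})=(M_{i_t}^*)^\dagger\cong M_{i_t-a-1}$, and $\Omega M_{i_t}\cong M_{i_t}^*$ (via Proposition~\ref{char_special} and reflexivity) match what the paper uses, and your minimal-reduction argument that $(-)^\dagger$ preserves the Ulrich property is a sound, self-contained proof of the fact the paper simply cites from \cite[Corollary~1.4]{Ooi}. The problem is that after these reductions you never prove the one statement that carries all the content: that $M_{n-i_t}\cong\Omega M_{i_t}$ is Ulrich, i.e.\ $\mu_R(M_{n-i_t})=\rme(R)$. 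You name this explicitly as ``the main obstacle'' and only gesture at two possible computations (a staircase count, or special McKay plus Riemann--Roch) without carrying either out. Since everything before it is, as you say, formal, the proposal as written does not prove the proposition.

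The idea you are missing is that no computation is needed at this point. Cyclic quotient surface singularities are rational, so by Artin's theorem (the paper's reference \cite{Art}) $R$ has maximal embedding dimension, equivalently minimal multiplicity: $\fkm^2=\fkq\fkm$ for a minimal reduction $\fkq=(f,g)$. Now take the minimal presentation $0\to\Omega M\to F\to M\to 0$ of any MCM module $M$, so $\Omega M\subseteq\fkm F$. Then
\[
\fkm\,\Omega M\;\subseteq\;\fkm^2F\cap\Omega M\;=\;\fkq\fkm F\cap\Omega M\;\subseteq\;\fkq F\cap\Omega M\;=\;\fkq\,\Omega M,
\]
where the last equality holds because $\fkq$ is a regular sequence on $M$, hence $\Tor_1^R(R/\fkq,M)=0$. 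By your own criterion ($N$ is Ulrich iff $\fkm N=\fkq N$), $\Omega M$ is Ulrich. This is exactly the argument of \cite[Lemma~4.2]{GOTWY1} that the paper invokes: special $\Rightarrow$ $R$-dual is a first syzygy of an MCM module $\Rightarrow$ Ulrich, and then the canonical dual step (which you did prove) handles $M_{i_t-a-1}=\tau(M_{i_t})$. Inserting this paragraph in place of your ``main obstacle'' discussion would complete your proof along essentially the paper's lines; the computational routes you sketch would instead amount to re-deriving Theorem~\ref{main} before it is available.
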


\begin{proof}
By Proposition~\ref{char_special}, $M_{i_t}^*$ is the syzygy of an MCM $R$-module. 
Thus, we can see that it is an Ulrich module by a similar argument as in \cite[Lemma~4.2]{GOTWY1}. 
In addition, the canonical dual of an Ulrich module is an Ulrich module, by \cite[Corollary~1.4]{Ooi}. 
Thus, $\tau(M_{i_t})=(M_{i_t}^*)^\dagger$ is also an Ulrich module. 
Since $M_{i_t}^*\cong M_{n-i_t}$ and $\tau(M_{i_t})\cong M_{i_t-a-1}$, we have the desired conclusion. 
\end{proof}

By this proposition, we can obtain some examples of Ulrich modules. 
However, there exist Ulrich modules that do not take the form given in Proposition~\ref{Ulrich_Rdual}. 
In order to determine all of these, we will describe the relationship between the multiplicity $\rme(M_t)=\rme(R)$ 
and the number of minimal generators $\mu_R(M_t)$ in terms of the $i$-series. 
To state our theorem, we prepare some notations. 
For the $i$-series $(i_1,\cdots,i_r)$ associated with $\frac{1}{n}(1,a)$ and for any $t\in [0, n-1]$,  
there exist unique non-negative integers $d_{1,t},\cdots,d_{r,t}\in\mathbb{Z}_{\ge 0}$ such that 
\begin{eqnarray*}
  t=d_{1,t}i_1+h_{1,t},\quad &h_{1,t}\in\mathbb{Z}_{\ge 0},&\quad 0\le h_{1,t}<i_1, \\
  h_{u,t}=d_{u+1,t}i_{u+1}+h_{u+1,t},\quad &h_{u+1,t}\in\mathbb{Z}_{\ge 0},&\quad 0\le h_{u+1,t}<i_{u+1},\quad(u=1,\cdots,r-1), \\
  &h_{r,t}=0.&
\end{eqnarray*}

Thus, we can describe $t$ as follows: 
\begin{eqnarray*}
  t&=&d_{1,t}i_1+d_{2,t}i_2+\cdots+d_{r,t}i_r \\
  &=&(\underbrace{i_1+\cdots+i_1}_{d_{1,t}})+(\underbrace{i_2+\cdots+i_2}_{d_{2,t}})+\cdots+(\underbrace{i_r+\cdots+i_r}_{d_{r,t}}).
\end{eqnarray*}

We are now in the position to state our theorem.

\begin{thm}
\label{main}
Let the notation be the same as above. Then, we have that
\[
\mu_R(M_t)=d_{1,t}+d_{2,t}+\cdots +d_{r,t}+1. 
\]
\end{thm}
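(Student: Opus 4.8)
The plan is to read off $\mu_R(M_t)=\dim_\Bbbk(M_t/\fkm M_t)$ directly from the monomial structure of $M_t$. Since $M_t=\bigoplus_{(i,j)\in L_t}\Bbbk\,x^iy^j$ with $L_t=\{(i,j)\in\ZZ_{\ge0}^2: i+ja\equiv t\pmod n\}$, and $\fkm M_t$ is spanned by the monomials lying in $g+L_t$ for some nonzero $g$ in the semigroup $\Gamma=\{(p,q)\in\ZZ_{\ge0}^2: p+qa\equiv 0\pmod n\}$, a monomial is a minimal generator precisely when its exponent vector is minimal in $L_t$ under addition of nonzero elements of $\Gamma$. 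In the row with fixed $y$-exponent $q$ the least admissible $x$-exponent is $f(q):=(t-qa)\bmod n$, and any monomial with larger $x$-exponent in that row is divisible by $x^n\in\fkm$, so only $x^{f(q)}y^q$ can be a generator. First I would prove that $x^{f(q)}y^q$ is a minimal generator if and only if $f(q)$ is a \emph{strict running minimum}, i.e. $f(q)<f(q')$ for all $q'<q$: if $f(q)\ge f(q')$ for some $q'<q$, then $(f(q)-f(q'),\,q-q')$ is a nonzero element of $\Gamma$ with nonnegative entries, placing $x^{f(q)}y^q$ in $\fkm M_t$; conversely a strict running minimum cannot be written as a lower point of $L_t$ plus a nonzero element of $\Gamma$. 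As $\gcd(a,n)=1$, $f$ attains the value $0$, so all strict running minima occur before the first row where $f=0$, and their number is finite and equals $\mu_R(M_t)$.

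Next I would track how the running minima progress. List them as $x^{v_k}y^{q_k}$ with $0=q_0<q_1<\cdots<q_m$ and $t=v_0>v_1>\cdots>v_m=0$. I claim each jump follows the greedy rule defining the $d_{u,t}$: if $u$ is the least index with $i_u\le v_k$, then $q_{k+1}=q_k+j_u$ and $v_{k+1}=v_k-i_u$. The crucial identity is $j_ua\equiv i_u\pmod n$, which holds because $x^{i_u}$ and $y^{j_u}$ both generate $M_{i_u}$ by Theorem~\ref{cyclic_special}; it gives $f(q_k+j_u)=(v_k-i_u)\bmod n=v_k-i_u$ since $0\le v_k-i_u<n$. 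What remains is to rule out a smaller value appearing earlier, i.e. $f(q_k+\delta)\ge v_k$ for $0<\delta<j_u$. Writing $g(\delta):=(-\delta a)\bmod n$ and using $f(q_k+\delta)=(v_k+g(\delta))\bmod n$, this is equivalent to $g(\delta)<n-v_k$ for $0<\delta<j_u$.

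This pins everything on one classical fact: the strict running maxima of $g(\delta)=(-\delta a)\bmod n$ occur exactly at $\delta=j_1<j_2<\cdots$, with values $n-i_1<n-i_2<\cdots$. Granting it, for $0<\delta<j_u$ the running maximum of $g$ attained so far is at most $n-i_{u-1}$, hence $g(\delta)\le n-i_{u-1}<n-v_k$ because $v_k<i_{u-1}$ by minimality of $u$; this is exactly the bound needed, while $g(j_u)=n-i_u$ realizes the jump. I would prove the running-maxima statement by induction on the continued-fraction depth, using the recursions $i_{u+1}=\alpha_ui_u-i_{u-1}$ and $j_{u+1}=\alpha_uj_u-j_{u-1}$ together with the determinant identity $i_{u-1}j_u-i_uj_{u-1}=n$; evaluating $g$ along the arithmetic progression $kj_u-j_{u-1}$ $(1\le k\le\alpha_u)$ and controlling the values in between is the heart of the matter, and this three-gap/continued-fraction control is the step I expect to be the main obstacle (everything else being elementary bookkeeping). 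Finally, since the $v_k$ strictly decrease and remain below $i_{u-1}$ once they drop below it, the index $u$ is nondecreasing in $k$ and is used exactly while $v\ge i_u$ — that is, precisely $d_{u,t}$ times, matching the greedy expansion $t=\sum_u d_{u,t}i_u$ that defines the $d_{u,t}$. Hence there are $\sum_u d_{u,t}$ jumps, and $\mu_R(M_t)=m+1=d_1+d_2+\cdots+d_r+1$.
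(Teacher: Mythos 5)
Your route is correct in outline and is genuinely different from the paper's primary proof, which is a two-line application of Kato's Riemann--Roch formula together with Wunram's identity $\rmc_1(\widetilde{M_t})\cdot E_{i_u}=d_{u,t}$. It is, however, structurally parallel to the paper's second, representation-theoretic proof: there, minimal generators are identified with paths in the AR quiver from $R$ to $M_t$ that avoid the free vertex, and these paths are organized into $d_1+\cdots+d_r$ blocks using Theorem~\ref{cyclic_special}; your running minima $x^{f(q)}y^q$ are exactly the exponent-lattice shadow of those paths, and your greedy descent by the steps $(j_u,i_u)$ is the paper's block decomposition in different clothing. What your version buys is that it is elementary and self-contained (no AR theory, no intersection theory), and your criterion ``$x^{f(q)}y^q$ is a minimal generator iff $f(q)$ is a strict running minimum'' is proved cleanly in both directions, whereas the paper's corresponding step is carried by figures.

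The cost is that all the difficulty is concentrated in the records lemma you flag, and you are right to single it out: it does \emph{not} follow from Theorem~\ref{cyclic_special} alone. Applying your own criterion to $t=i_u$ (generators $x^{i_u},y^{j_u}$) yields only $g(\delta)<n-i_u$ for $0<\delta<j_u$, while your descent step needs the stronger bound $g(\delta)\le n-i_{u-1}$ on the whole range $j_{u-1}<\delta<j_u$, since $v_k$ can be as large as $i_{u-1}-1$. So the lemma must be proved from scratch; it is true, classical, and your named ingredients do suffice. A clean completion: the determinant identity $i_{u-1}j_u-i_uj_{u-1}=n$ shows that $(j_{u-1},i_{u-1})$ and $(j_u,i_u)$ form a $\ZZ$-basis of the congruence lattice $\Lambda=\{(q,p)\in\ZZ^2 : p\equiv qa \ (\mathrm{mod}\ n)\}$; the recursion with $\alpha_u\ge 2$ makes the chain $(j_0,i_0),\ldots,(j_{r+1},i_{r+1})$ convex, so the cones on consecutive pairs cover the first quadrant and every nonzero point of $\Lambda\cap\ZZ_{\ge0}^2$ is a nonnegative \emph{integer} combination of two consecutive chain vectors; a three-case check on which pair occurs then gives $(qa)\bmod n\ge i_{u-1}$ whenever $j_{u-1}<q<j_u$, which is your records statement. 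With that lemma in place, the remaining steps of your argument (the jump identity $f(q_k+j_u)=v_k-i_u$ via $j_ua\equiv i_u$, the monotonicity of the index $u$ along the descent, and the count of jumps being $\sum_u d_{u,t}$) are complete and correct, so the proposal stands as a valid third proof of Theorem~\ref{main}.
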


We will present proofs of two forms (one geometric and one representation theoretic) for this theorem. 
The geometric proof is quite simple, and states that the above formula is a reinterpretation of special McKay correspondence, 
from the viewpoint of Ulrich modules. 
However, the authors believe that the method used in the other proof will provide us with new insight to this subject 
(see, e.g., Remark~\ref{non_cyclic}).   
Therefore, we present both of them. 

\begin{proof}[Geometric proof of Theorem~\ref{main}] 
By Kato's Riemann-Roch formula \cite{Kat}, we have that
\[
\mu_R(M_t)=1+\rmc_1(\widetilde{M_t})\cdot(E_{i_1}+\cdots+E_{i_r}). 
\]
Furthermore, $\rmc_1(\widetilde{M_t})\cdot E_{i_u}=d_{u,t}$ \cite{Wun2}. Thus, we have reached the desired conclusion. 
\end{proof}

\medskip

Before moving to the representation theoretic proof, we note a crucial observation. 
Since $M_t\cong\Hom_R(R, M_t)$, a path from $R$ to $M_t$ on the AR quiver corresponds to an element of $M_t$. 
For example, in the AR quiver of Example~\ref{ex12/7_AR}, we can find a path 
\[
0\overset{x}{\longrightarrow}1\overset{y}{\longrightarrow}8\overset{x}{\longrightarrow}9\overset{x}{\longrightarrow}10\overset{y}{\longrightarrow}5, 
\] 
and a unit $1\in R$ maps to $x^3y^2\in M_5$ via the above path. 
Note that if a given path from $R$ to $M_t$ factors through a free module that is not the starting point, then its image will be in $\fkm M_t$. 
Thus, by Nakayama's lemma such a path does not correspond to a minimal generator of $M_t$, 
\[
\fkm M_t\cong\{R\overset{\text{non-split}}{\rightarrow}R^{\oplus m}\rightarrow M_t\}. 
\]
Therefore, we can identify a minimal generator of $M_t$ with a path from $R$ to $M_t$ that does not factor through any free module except the starting point.  
In the following, we will count such paths on the AR quiver. 

\begin{proof}[Representation theoretic proof of Theorem~\ref{main}] 
We write the AR quiver $\mathsf{Q}$ in the form of the translation quiver $\ZZ\mathsf{Q}$, as shown in Figure~\ref{proof_AR} 
(where this is the repetition of the AR quiver).   
Here, each diagram 
$\begin{array}{c}\tiny{
\xymatrix@C=8pt@R=8pt{a\ar[r]^y\ar@{}[dr]|\circlearrowleft&b\\c\ar[u]^x\ar[r]_y&d\ar[u]_x}}\end{array}$ 
corresponds to the AR sequence 
$0\rightarrow M_c\rightarrow M_a\oplus M_d\rightarrow M_b\rightarrow 0$ 
ending in $M_b$ ($b\neq 0$) and the fundamental sequence of $R$. In particular, they are each commutative. 

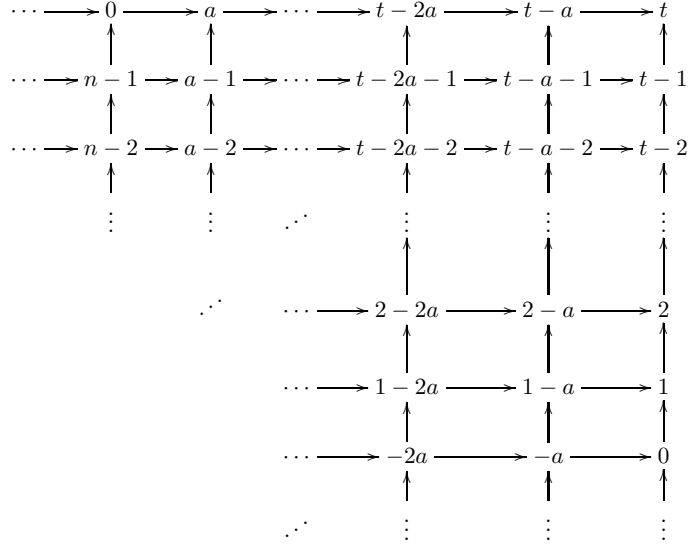
\begin{figure}[!h]
\begin{center}
\scalebox{0.8}{
\xymatrix@C=15pt@R=18pt{
  \cdots\ar[r]&0\ar[r]&a\ar[r]&\cdots\ar[r]&t-2a\ar[r]&t-a\ar[r]&t \\
  \cdots\ar[r]&n-1\ar[u]\ar[r]&a-1\ar[u]\ar[r]&\cdots\ar[r]&t-2a-1\ar[u]\ar[r]&t-a-1\ar[u]\ar[r]&t-1\ar[u] \\
  \cdots\ar[r]&n-2\ar[u]\ar[r]&a-2\ar[u]\ar[r]&\cdots\ar[r]&t-2a-2\ar[u]\ar[r]&t-a-2\ar[u]\ar[r]&t-2\ar[u] \\
  &\vdots\ar[u]&\vdots\ar[u]&\reflectbox{$\ddots$}&\vdots\ar[u]&\vdots\ar[u]&\vdots\ar[u] \\
  &&\reflectbox{$\ddots$}&\cdots\ar[r]&2-2a\ar[u]\ar[r]&2-a\ar[u]\ar[r]&2\ar[u] \\
  &&&\cdots\ar[r]&1-2a\ar[u]\ar[r]&1-a\ar[u]\ar[r]&1\ar[u] \\
  &&&\cdots\ar[r]&-2a\ar[u]\ar[r]&-a\ar[u]\ar[r]&0\ar[u]  \\
  &&&\reflectbox{$\ddots$}&\vdots\ar[u]&\vdots\ar[u]&\vdots\ar[u] 
} }
\end{center}
\caption{The AR quiver with the form of the translation quiver }
\label{proof_AR}
\end{figure}

From this quiver, we extract appropriate paths from $R \ (= 0)$ to $M_t \ (= t)$ corresponding to 
minimal generators of $M_t$. Such paths take the form shown in Figure~\ref{proof_AR1}.  
Here, we assume that grayed areas do not contain $0$, otherwise we can divide those areas into smaller ones. 
Since any $0$ vertex that is located at the outside of Figure~\ref{proof_AR1} certainly goes through a free module on the way to $M_t$, 
we need only consider paths from $R$ to $M_t$ appearing in Figure~\ref{proof_AR1}. 
Furthermore, the number of $0$ vertices appearing in Figure~\ref{proof_AR1} coincides with $\mu_R(M_t)$.  
To clarify the situation, we denote the $i$-th column from the right containing $0$ by $\ell_i$, for $i=1,\cdots,\mu_R(M_t)$. 
We see that the length of $\ell_1$ is $t$, and it is divided into $\mu_R(M_t)-1$ blocks. Furthermore,
we denote vertices located at the corner of a diagram by $\bigstar_1, \bigstar_2, \cdots, \bigstar_{\nu-1},\bigstar_\nu$, 
as shown in Figure~\ref{proof_AR1}. 
We remark that these vertices are special CM modules, because the number of minimal generators for each of these is two. 
From this construction, we have that $\nu=\mu_R(M_t)-1$. 
In the following, we will show that $\nu=d_{1,t}+d_{2,t}+\cdots+d_{r,t}$. (For simplicity, we will simply denote $d_{u, t}$ by $d_u$.) 

\begin{figure}[!h]
\begin{center}
\begin{tikzpicture}[>=stealth, scale=0.65]
\node (P1) at (0,0) {$0$}; 
\node (P2) at (3,0) {$\bigstar_\nu$}; 
\node (P3) at (15,0) {$t$}; 

\node (P4) at (3,-3) {$0$}; 
\node (P5) at (6,-3) {$\bigstar_{\nu-1}$}; 
\node (P6) at (15,-1) {{\footnotesize$t-1$}}; 
\node (P7) at (6,-6) {$0$}; 
\node (P8) at (9,-9) {$0$}; 
\node (P9) at (12,-9) {$\bigstar_2$}; 
\node (P10) at (12,-12) {$0$}; 
\node (P11) at (15,-12) {$\bigstar_1$};
\node (P12) at (15,-15) {$0$};  

\node (L1) at (15,-16.2) {$\ell_1$} ;
\node (L2) at (12,-16.2) {$\ell_2$} ;
\node (L3) at (9,-16.2) {$\ell_3$} ;
\node (Lm-1) at (3,-16.2) {$\ell_{\mu(M_t)-1}$} ;
\node (Lm) at (0,-16.2) {$\ell_{\mu(M_t)}$} ;

\filldraw[draw=black!10, fill=black!10]  (0,-0.3) rectangle (2.7,-3);
\filldraw[draw=black!10, fill=black!10]  (3,-3.3) rectangle (5.7,-6);
\filldraw[draw=black!10, fill=black!10]  (9,-9.3) rectangle (11.7,-12);
\filldraw[draw=black!10, fill=black!10]  (12,-12.3) rectangle (14.7,-15); 
\filldraw[draw=black!10, fill=black!10]  (3.3,-0.3)--(3.3,-2.7)--(6.8,-2.7)--(6.8,-5.7)--(7.3,-5.7)--(9.3,-7.7)--(9.3,-8.7)--(12.5,-8.7)
--(12.5,-11.7)--(14.35,-11.7)--(14.35,-0.3)--(3.3,-0.3);

\draw[->] (P1)--(0.9,0); 
\draw[dotted, thick] (1.1,0)--(1.7,0); 
\draw[->] (1.8,0)--(P2);
\draw[->] (P2)--(3.9,0); 
\draw[dotted, thick] (4.1,0)--(4.9,0); 
\draw[dotted, thick] (8.1,0)--(9.9,0); 
\draw[dotted, thick] (13.1,0)--(13.9,0); 
\draw[->] (14.1,0)--(P3); 

\draw[->] (P4)--(3.9,-3) ;
\draw[dotted, thick] (4.1,-3)--(4.5,-3);
\draw[->] (4.6,-3)--(P5) ; 
\draw[->] (P7)--(6.9,-6) ;
\draw[->] (P8)--(9.9,-9) ;
\draw[dotted, thick] (10.1,-9)--(10.9,-9);
\draw[->] (11.1,-9)--(P9) ;
\draw[->] (P10)--(12.9,-12) ;
\draw[dotted, thick] (13.1,-12)--(13.9,-12);
\draw[->] (14.1,-12)--(P11) ;

\draw[dotted, thick] (7.6,-6.5)--(8.5,-7.4); 

\draw[->] (P6)--(P3); 
\draw[->] (15,-1.9)--(P6); 
\draw[dotted, thick] (15,-2.9)--(15,-2.1);
\draw[dotted, thick] (15,-7.9)--(15,-5.1);
\draw[dotted, thick] (15,-10.9)--(15,-10.1);
\draw[->] (P11)--(15,-11.1) ;

\draw[->] (3,-0.9)--(P2); 
\draw[dotted, thick] (3,-1.9)--(3,-1.1);
\draw[->] (P4)--(3,-2.1); 
\draw[->] (6,-3.9)--(P5); 
\draw[dotted, thick] (6,-4.9)--(6,-4.1);
\draw[->] (P7)--(6,-5.1); 
\draw[->] (P8)--(9,-8.1) ;
\draw[->] (12,-9.9)--(P9); 
\draw[dotted, thick] (12,-10.9)--(12,-10.1);
\draw[->] (P10)--(12,-11.1); 
\draw[->] (15,-12.9)--(P11); 
\draw[dotted, thick] (15,-13.9)--(15,-13.1);
\draw[->] (P12)--(15,-14.1); 

\draw (15.8,0)--(17,0);
\draw (15.8,-3)--(17,-3);
\draw (15.8,-6)--(17,-6);
\draw (15.8,-9)--(17,-9);
\draw (15.8,-12)--(17,-12);
\draw (15.8,-15)--(17,-15);

\draw[<->,thick] (16.4,-0.2)--(16.4,-2.8) ;
\draw[<->,thick] (16.4,-3.2)--(16.4,-5.8) ;
\draw[<->,thick] (16.4,-9.2)--(16.4,-11.8) ;
\draw[<->,thick] (16.4,-12.2)--(16.4,-14.8) ;
\draw[dotted, very thick] (16.4,-7)--(16.4,-8);

\draw (17.5,0) to [out=-75, in=75] node [swap,above, xshift=0.85cm]{{\footnotesize$\mu(M_t)-1$}} (17.5,-15) ;
\node (B) at (20.2,-7.6) {\text{blocks}}; 
\end{tikzpicture}
\end{center}
\caption{Paths from $R$ to $M_t$ corresponding to minimal generators of $M_t$}
\label{proof_AR1}
\end{figure}
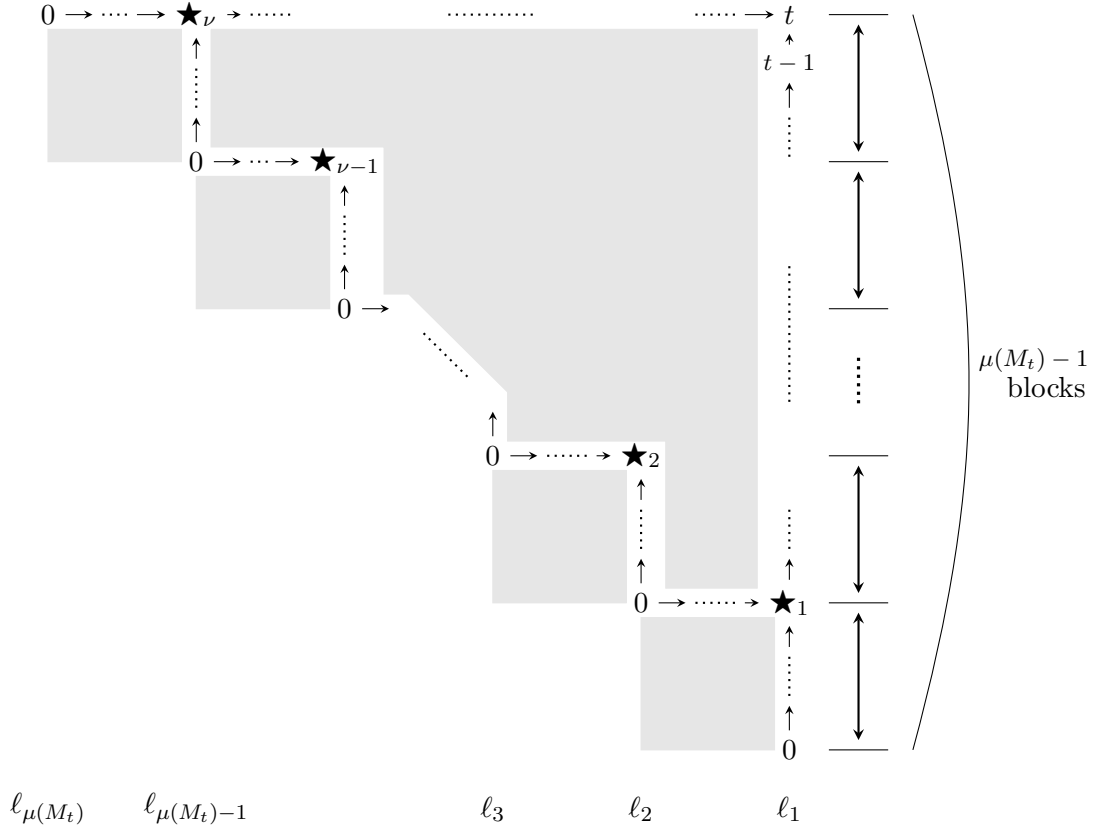

We set $i_s=\mathrm{max}\{\,i_u \mid d_u\neq 0\}$. Then 
we can find the vertex $i_s$ on $\ell_1$. 
From that position, we follow vertices in the left direction, and 
if we arrive at a vertex $0$ we stop there (see Figure~\ref{proof_AR2}). 
Since $M_{i_s}$ is a special CM module, the length of the vertical (resp. horizontal) path from $0$ to $i_s$ 
is $i_s$ (resp. $j_s$), by Theorem~\ref{cyclic_special}.  
By the choice of $i_s$, we have that $\bigstar_1\le i_s$. 
If $\bigstar_1< i_s$, then we obtain Figure~\ref{proof_AR3}, because the $j$-series is a decreasing sequence. 
This contradicts the construction of Figure~\ref{proof_AR1}, and hence it follows that $i_s$ coincides with $\bigstar_1$. 
If $d_s-1\neq 0$, then we can find the vertex $i_s$ on $\ell_2$. 
Then, we again follow vertices from $i_s$ in the left direction until we arrive at $0$. 
Thus, we see that $\bigstar_2=i_s$ by the same argument used in the case that $\bigstar_1$. 
Since $d_si_s\le t$, we can repeat this argument $d_s$ times to find that $\bigstar_1,\cdots,\bigstar_{d_s}$ are all equal to $i_s$. 

{\footnotesize
\begin{figure}[!h]
\begin{tabular}{c}
\begin{minipage}{0.45\hsize}

\begin{center}
\begin{tikzpicture}[>=stealth, scale=0.54]
\node (P0) at (-2,0) {$0$}; 
\node (P1) at (0,0) {$a$}; 
\node (P2) at (6,0) {$i_s$}; 
\node (P3) at (6,-6) {$1$}; 
\node (P4) at (6,-8) {$0$}; 

\draw (-2,0.8) to [out=30, in=150] node [swap,above]{$j_s$} (6,0.8) ;
\draw (6.8,0) to [out=-60, in=60] node [swap,above, xshift=0.25cm]{$i_s$} (6.8,-8) ;

\draw[->] (-1.7,0)--node [swap,above]{$y$} (-0.3,0); 
\draw[->] (0.3,0)--node [swap,above]{$y$} (1.7,0); 
\draw[dotted, thick] (2.2,0)--(3.8,0); 
\draw[->] (4.3,0)--node [swap,above]{$y$} (5.7,0); 

\draw[->] (6,-1.8)--node [swap,above,xshift=0.2cm, yshift=-0.25cm]{$x$} (6,-0.4); 
\draw[dotted, thick] (6,-2.2)--(6,-3.8); 
\draw[->] (6,-5.6)--node [swap,above,xshift=0.2cm, yshift=-0.25cm]{$x$} (6,-4.2); 
\draw[->] (6,-7.6)--node [swap,above,xshift=0.2cm, yshift=-0.25cm]{$x$} (6,-6.5); 

\end{tikzpicture}
\end{center}
\caption{Minimal paths from $R$ to $M_{i_s}$}
\label{proof_AR2}
\end{minipage} 

\begin{minipage}{0.5\hsize}
\begin{center}
\begin{tikzpicture}[>=stealth, scale=0.405]
\node (P0) at (0,0) {$0$}; 
\node (P1) at (6,0) {$i_s$}; 
\node (P3) at (-4,-6) {$0$}; 
\node (P4) at (6,-6) {$\bigstar_1$}; 
\node (P5) at (6,-12) {$0$}; 

\draw[->] (0.3,0)--node [swap,above]{$y$} (1.7,0); 
\draw[dotted, thick] (2.2,0)--(3.8,0); 
\draw[->] (4.3,0)--node [swap,above]{$y$} (P1); 

\draw[->] (-3.7,-6)--node [swap,above]{$y$} (-2.3,-6); 
\draw[dotted, thick] (-1.8,-6)--(-0.2,-6); 
\draw[->] (0.3,-6)--node [swap,above]{$y$} (1.7,-6); 
\draw[dotted, thick] (2.2,-6)--(3.8,-6); 
\draw[->] (4.3,-6)--node [swap,above]{$y$} (P4); 

\draw[->] (6,-1.8)--node [swap,above,xshift=0.2cm, yshift=-0.25cm]{$x$} (P1); 
\draw[dotted, thick] (6,-2.2)--(6,-3.8); 
\draw[->] (6,-5.6)--node [swap,above,xshift=0.2cm, yshift=-0.25cm]{$x$} (6,-4.2); 

\draw[->] (6,-7.8)--node [swap,above,xshift=0.2cm, yshift=-0.25cm]{$x$} (6,-6.5); 
\draw[dotted, thick] (6,-8.2)--(6,-9.8); 
\draw[->] (6,-11.6)--node [swap,above,xshift=0.2cm, yshift=-0.25cm]{$x$} (6,-10.2); 
\end{tikzpicture}
\end{center}

\caption{What is happen if $\bigstar_1< i_s$}
\label{proof_AR3}
\end{minipage}
\end{tabular}
\end{figure}}

Next, we set $i_{s^\prime}=\mathrm{max}\{\,i_u \mid d_u\neq 0, \ i_u<i_s\}$. 
Then, we can find the vertex $i_{s^\prime}$ on $\ell_{d_s+1}$. 
By the same argument as above, we see that $\bigstar_{d_s+1}=i_{s^\prime}$. 
Similarly, we see that $\bigstar_{d_s+1}, \cdots, \bigstar_{d_s+d_{s^\prime}}$ are all equal to $i_{s^\prime}$. 

We can repeat these arguments until we arrive at $\ell_{\mu_R(M_t)}$. 
Then, we see that $\nu$ is equal to the sum of all $d_s$ with $d_s\neq0$, 
and hence $\mu_R(M_t)-1=d_1+d_2+\cdots +d_r$. 
\end{proof}

Since $\rme(R)=\rme(M_t)$ and $\mu_R(M_t)\le\rme(M_t)$, 
we may set $\mu_R(M_t)=\rme(R)-s$, where $0\le s\le\rme(R)-1$.  
The next corollary follows immediately from the theorem. 
Using this corollary, we can determine which $M_t$ are Ulrich modules. 
 
\begin{cor}
\label{main_cor}
Let the notation be the same as above. Then, 
\[
\mu_R(M_t)=\rme(R)-s \Longleftrightarrow d_{1,t}+d_{2,t}+\cdots+d_{r,t}=\rme(R)-(s+1) 
\]
for $s=0, 1, \cdots, \rme(R)-1$. 
In particular, an MCM $R$-module $M_t$ is Ulrich if and only if $d_{1,t}+d_{2,t}+\cdots+d_{r,t}=\rme(R)-1$. 
\end{cor}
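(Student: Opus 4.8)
The plan is to read this off directly from Theorem~\ref{main}, which furnishes the exact formula $\mu_R(M_t)=d_1+d_2+\cdots+d_r+1$. The entire content of the corollary is a rearrangement of this single identity together with the elementary observation that $\rme(M_t)=\rme(R)$, so the argument is purely algebraic and the word ``immediately'' is warranted; there is no genuine obstacle to overcome.

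First I would record the multiplicity identity justifying the setup. As noted in Section~\ref{cyclic_case}, each $M_t$ has $\rank_R M_t=1$, and since $R$ is a domain the formula from the introduction gives $\rme(M_t)=(\rank_R M_t)\,\rme(R)=\rme(R)$. Combined with the general inequality $\mu_R(M_t)\le\rme(M_t)$ and the fact that $M_t$ is nonzero (so $\mu_R(M_t)\ge 1$, visible also from Theorem~\ref{main} since all $d_u\ge 0$), this shows that writing $\mu_R(M_t)=\rme(R)-s$ with $0\le s\le\rme(R)-1$ is legitimate and that $s$ ranges exactly over the stated interval.

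Next I would substitute Theorem~\ref{main} into $\mu_R(M_t)=\rme(R)-s$. This yields $d_1+\cdots+d_r+1=\rme(R)-s$, which rearranges at once to $d_1+\cdots+d_r=\rme(R)-(s+1)$; reading the same identity backwards recovers $\mu_R(M_t)=\rme(R)-s$. Hence the two conditions are equivalent for each $s$ in the range, establishing the displayed equivalence.

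Finally, for the ``in particular'' clause I would specialize to $s=0$. By Definition~\ref{def_Ulrich}, $M_t$ is Ulrich precisely when $\mu_R(M_t)=\rme(M_t)=\rme(R)$, that is, exactly when $s=0$; the equivalence just proved then reads $d_1+\cdots+d_r=\rme(R)-1$, which is the asserted characterization. The only point demanding any care is the identity $\rme(M_t)=\rme(R)$, which rests on $R$ being a domain and $M_t$ having rank one, both recorded earlier.
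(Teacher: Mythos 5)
Your proposal is correct and matches the paper's own (implicit) argument: the paper likewise sets $\mu_R(M_t)=\rme(R)-s$ using $\rme(M_t)=\rme(R)$ (from $\rank_RM_t=1$) and the inequality $\mu_R(M_t)\le\rme(M_t)$, and then notes the corollary follows immediately by substituting the formula of Theorem~\ref{main}. Your extra care about the range of $s$ and the $s=0$ specialization is exactly the intended reading.
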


\begin{ex}
Let $G=\frac{1}{12}(1,7)$ be a cyclic group of order $12$ (see also Examples~\ref{ex12/7} and \ref{ex12/7_AR}).  
In this case, the non-free special CM modules are $M_7, M_2, M_1$, and $\rme(R)=4$. 
Furthermore, we obtain the following division of each subscript into integers appearing in the $i$-series:
\[\begin{array}{rclccrclccrcl}
  11&=&7+2+2&&&7&=&7&&&3&=&2+1 \\
  10&=&7+2+1&&&6&=&2+2+2&&&2&=&2 \\
  9&=&7+2&&&5&=&2+2+1&&&1&=&1 \\
  8&=&7+1&&&4&=&2+2 &&&&&
\end{array}\] 
Therefore, the Ulrich modules are $M_{11}, \,M_{10}, \,M_6,$ and $M_5$. 
The following figure represents paths in the AR quiver that correspond to minimal generators of $M_{10}$. 
{\small
\[\xymatrix@C=12pt@R=12pt{
  0\ar[r]&7\ar[r]&\cdots\ar[r]&6\ar[r]&1\ar[r]&8\ar[r]&3\ar[r]&10 \\
  &&&&0\ar[u]\ar[r]&7\ar[u]\ar[r]&2\ar[r]\ar[u]&9\ar[u] \\
  &&&&&&1\ar[r]\ar[u]&8\ar[u] \\
  &&&&&&0\ar[r]\ar[u]&7\ar[u] \\
  &&&&&&&\vdots\ar[u] \\
  &&&&&&&0\ar[u] 
}\]}
\end{ex}

\begin{rem}
\label{non_cyclic}
The method used in the representation theoretic proof enables us to determine Ulrich modules for other quotient surface singularities. 
For example, see \cite[Example~3.6 and A.5]{Nak2}.
\end{rem}

\medskip

In this manner, we can obtain a characterization of Ulrich modules. 
However, if the order of $G$ is sufficiently large, then a process to obtain the sequence $(d_{1,t}, \cdots, d_{r,t})$ for 
any $t=0, 1, \cdots, n-1$ will be inefficient. 
Therefore, we will demonstrate an additional characterization of Ulrich modules, in terms of the $i$-series. 
First, we decompose each subscript $t=0, 1, \cdots, n-1$ as we did at the beginning of this section: 
\begin{eqnarray}
\label{decomp}
t=d_{1,t}i_1+d_{2,t}i_2+\cdots+d_{r,t}i_r. 
\end{eqnarray}
Then, for each subscript $t=0, 1, \cdots, n-1$ we define a subset of the $i$-series as follows: 
\[
\ttI_t\coloneqq \{ i_s\;|\;d_{s,t}\neq 0 \text{ in the decomposition (\ref{decomp})} \ \text{for}\ s=1,2,\cdots,r-1\}. 
\]
In order to characterize Ulrich modules, we need to determine $\ttI_{n-1}$. 
Since we can decompose $n-1$ as  
\begin{equation}
\label{n-1_decomp}
\begin{array}{rcl}
n-1&=&\alpha_1i_1-i_2-1\\
&=&(\alpha_1-1)i_1+(i_1-i_2)-1 \\
&=&(\alpha_1-1)i_1+(\alpha_2-1)i_2-i_3-1\\
&=&(\alpha_1-1)i_1+(\alpha_2-2)i_2+(i_2-i_3)-1\\
&\vdots& \\
&=&(\alpha_1-1)i_1+(\alpha_2-2)i_2+\cdots+(\alpha_{r-1}-2)i_{r-1}+(\alpha_r-1)i_r-i_{r+1}-1\\ 
&=&(\alpha_1-1)i_1+(\alpha_2-2)i_2+\cdots+(\alpha_{r-1}-2)i_{r-1}+(\alpha_r-2)i_r,  
\end{array}
\end{equation}
we have that
\begin{equation}
\label{def_In}
\ttI_{n-1}=\{i_1\}\cup\{i_s\;|\;\alpha_s>2 \text{ and } 2\le s\le r-1 \}. 
\end{equation}
Since the sum of coefficients is $(\alpha_1-1)+\sum^r_{u=2}(\alpha_u-2)=\alpha_1+\cdots+\alpha_r-2r+1=\rme(R)-1$,  
$M_{n-1}$ is an Ulrich module. 
(This also follows from Proposition~\ref{Ulrich_Rdual}, because we have that $i_r=1$ from the definition of the $i$-series.) 

\medskip

We are now ready to state our main theorem. 

\begin{thm}
\label{main2}
Consider any sequences of the $i$-series $i_{k(1)}>i_{k(2)}>\cdots>i_{k(2b)}$ with $i_{k(2c-1)}\in\ttI_{n-1}$ for all $c=1,\cdots,b$. 
If $t=n-1-\displaystyle\sum^b_{c=1}\big(i_{k(2c-1)}-i_{k(2c)}\big)$ or $t=n-1$, then $M_t$ is an Ulrich module. 
\end{thm}

\begin{proof}
We already know that $M_{n-1}$ is an Ulrich module. 
Thus, we will consider other cases. 

We consider the part of the sequence given by $i_{k(1)}>i_{k(2)}$ with $i_{k(1)}\in\ttI_{n-1}$. 
Using the decomposition (\ref{n-1_decomp}), we rewrite $n-1-(i_{k(1)}-i_{k(2)})$ as
\[\begin{array}{ccl}
\text{( if $k(1)=1$ )}&=&\displaystyle\sum^{k(2)-1}_{v=1}(\alpha_v-2)i_v+(\alpha_{k(2)}-1)i_{k(2)}+\sum^r_{v={k(2)}+1}(\alpha_v-2)i_v.\\
\text{( if $k(1)\neq1$ )}&=&(\alpha_1-1)i_1+\displaystyle\sum^{k(1)-1}_{v=2}(\alpha_v-2)i_v+(\alpha_{k(1)}-3)i_{k(1)} \\
&&+\displaystyle\sum^{k(2)-1}_{v=k(1)+1}(\alpha_v-2)i_v+(\alpha_{k(2)}-1)i_{k(2)}+\sum^r_{v={k(2)}+1}(\alpha_v-2)i_v.
\end{array}\]
In this decomposition, the coefficients of the $i$-series satisfy the conditions given in Lemma~\ref{key_lem1} below, 
and their sum is equal to $\alpha_1+\cdots+\alpha_r-2r+1=\rme(R)-1$. Therefore, $M_{n-1-(i_{k(1)}-i_{k(2)})}$ is 
an Ulrich module, by Corollary~\ref{main_cor}. 
Then, we consider the part of sequence given by $i_{k(3)}>i_{k(4)}$ with $i_{k(3)}\in\ttI_{n-1}$. 
Considering the decomposition of $n-1-(i_{k(1)}-i_{k(2)})-(i_{k(3)}-i_{k(4)})$, 
the same argument implies that $M_t$ is an Ulrich module for $t=n-1-(i_{k(1)}-i_{k(2)})-(i_{k(3)}-i_{k(4)})$.
By repeating these arguments, we obtain the desired conclusion. 
\end{proof}

\medskip

In addition, we can demonstrate the converse of Theorem~\ref{main2}. 

\begin{thm}
\label{main3}
If $M_t$ is an Ulrich module $(t\neq n-1)$, then there exists a sequence 
of the $i$-series $i_{k(1)}>i_{k(2)}>\cdots>i_{k(2b)}$ such that $i_{k(2c-1)}\in\ttI_{n-1}$ for all $c=1,\cdots,b$, and 
\[
t=n-1-\displaystyle\sum^b_{c=1}\big(i_{k(2c-1)}-i_{k(2c)}\big).
\]
\end{thm}

\begin{proof}
Suppose that $M_t$ is an Ulrich module, and let $t=d_1i_1+d_2i_2+\cdots+d_ri_r$, as in Lemma~\ref{key_lem1}. 
Recall that $n-1=(\alpha_1-1)i_1+(\alpha_2-2)i_2+\cdots+(\alpha_r-2)i_r$. Then, we set the integers 
\[
(\varepsilon_1, \varepsilon_2, \cdots, \varepsilon_r)\coloneqq (d_1-(\alpha_1-1), d_2-(\alpha_2-2), \cdots, d_r-(\alpha_r-2)). 
\]
Since $t\neq n-1$, it holds that $(\varepsilon_1, \cdots, \varepsilon_r)\neq (0, \cdots, 0)$. 
By Lemmas~\ref{key_lem2} and \ref{key_lem3}, we can take the sequence of the $i$-series as given in the statement. 
\end{proof} 

To complete the proof of Theorem~\ref{main3}, we require the following lemmas. 

\begin{lem}(\cite[Lemma~1]{Wun1})
\label{key_lem1}
A sequence $(d_1,\cdots,d_r)\in(\mathbb{Z}_{\ge 0})^r$ can be obtained from the description $t=d_1i_1+d_2i_2+\cdots+d_ri_r$ 
for some subscript $t=0, 1, \cdots, n-1$ if and only if the sequence satisfies the following two conditions:  
\begin{itemize}
\item[$\cdot$] $0\le d_u\le\alpha_u-1$ for every $u=1, \cdots, r$. 
\item[$\cdot$] If $d_u=\alpha_u-1$ and $d_v=\alpha_v-1$ $(u<v)$, then there exists $w$ such that $u<w<v$ and $d_w\le\alpha_w-3$. 
\end{itemize}
\end{lem}

\begin{lem}
\label{key_lem2} 
One has that $\varepsilon_1\in\{-1, 0\}$ and $\varepsilon_u\in\{-1, 0, 1\}$ for $u=2, \cdots, r$. 
\end{lem}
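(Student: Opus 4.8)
The plan is to reformulate everything in terms of the \emph{deficiencies} $\delta_u\coloneqq(\alpha_u-1)-d_u$, so that $\varepsilon_1=-\delta_1$ and $\varepsilon_u=1-\delta_u$ for $u\ge 2$. The upper bounds are then immediate from Lemma~\ref{key_lem1}: the condition $d_u\le\alpha_u-1$ says exactly $\delta_u\ge 0$, whence $\varepsilon_1\le 0$ and $\varepsilon_u\le 1$. The whole content of the lemma is therefore the lower bounds, namely $\delta_1\le 1$ and $\delta_u\le 2$ for $u\ge 2$, and these must be extracted from the hypothesis that $M_t$ is Ulrich.

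The key input is that, by Corollary~\ref{main_cor}, $M_t$ being Ulrich is equivalent to $d_1+\cdots+d_r=\rme(R)-1$, and since $\rme(R)=\alpha_1+\cdots+\alpha_r-2(r-1)$ a short computation turns this into the single identity $\sum_{u=1}^r\delta_u=r-1$. So the deficiencies are nonnegative integers summing to exactly $r-1$, and I must show that, under the combinatorial restrictions of Lemma~\ref{key_lem1}, this forces every $\delta_u\le 2$ with $\delta_1\le 1$.

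To do this I would set $Z\coloneqq\{u\mid\delta_u=0\}=\{u\mid d_u=\alpha_u-1\}$ and write $m\coloneqq|Z|$. First note $Z\neq\emptyset$, since otherwise all $\delta_u\ge 1$ and $\sum_u\delta_u\ge r$. The second condition of Lemma~\ref{key_lem1}, applied to each consecutive pair of elements of $Z$, produces a strictly intermediate index $w$ with $d_w\le\alpha_w-3$, i.e. $\delta_w\ge 2$ (in particular consecutive elements of $Z$ are non-adjacent, which is exactly what makes such $w$ exist); these $w$'s lie in distinct gaps, so there are at least $m-1$ indices with $\delta\ge 2$. Counting now gives $\sum_u\delta_u\ge (r-m)\cdot 1+(m-1)\cdot 1=r-1$, where the first term bounds the $r-m$ indices outside $Z$ (each $\delta\ge 1$) and the second the extra unit contributed by the $m-1$ separators. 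Since the sum is exactly $r-1$, every inequality is an equality: no index has $\delta\ge 3$, there are exactly $m-1$ indices with $\delta=2$, and each of these lies strictly between two consecutive elements of $Z$. In particular $\delta_u\le 2$ for every $u$, giving $\varepsilon_u\ge -1$ for $u\ge 2$.

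Finally, for the first coordinate I would observe that the index $1$ can never sit strictly between two elements of $Z$, as nothing lies to its left; hence $1$ is not one of the $m-1$ separators, so $\delta_1\neq 2$, i.e. $\delta_1\le 1$ and $\varepsilon_1\in\{-1,0\}$. The main obstacle is the equality analysis in the counting step: one must verify carefully that the minimal total deficiency $r-1$ is attained only when all $\delta_u\le 2$ and the value $2$ occurs exactly at the gap separators, since it is precisely this that upgrades the generic bound $\delta_1\le 2$ to the sharper $\delta_1\le 1$ required for $\varepsilon_1\in\{-1,0\}$.
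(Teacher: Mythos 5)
Your proof is correct, and it takes a genuinely different route from the paper's. The paper argues by contradiction: assuming some $\varepsilon_u\le -2$, it repeatedly invokes Lemma~\ref{key_lem1} to produce, between any two consecutive indices with $\varepsilon=+1$, an index with $\varepsilon\le -1$, so that the zero-sum condition $\varepsilon_1+\cdots+\varepsilon_r=0$ keeps demanding yet another $+1$ further to the right; this never-terminating process contradicts finiteness of the sequence, and the argument is run twice (once for $\varepsilon_1\le-2$, once for $\varepsilon_u\le-2$ with $u\ge 2$, the latter split into the subcases $\varepsilon_1=0$ and $\varepsilon_1=-1$). Your deficiency bookkeeping replaces all of this with one global count: from Corollary~\ref{main_cor} you get $\sum_u\delta_u=r-1$, each of the $r-m$ indices outside $Z$ contributes at least $1$, and the $m-1$ separators supplied by Lemma~\ref{key_lem1} (one per gap between consecutive elements of $Z$, distinct because the gaps are disjoint) contribute an extra $1$ each, forcing equality everywhere. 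Besides being shorter and case-free, the equality analysis buys strictly more than the paper's lemma: it shows $\delta_u\in\{0,1,2\}$ with $\delta_u=2$ occurring exactly once in each gap between consecutive elements of $Z$ and nowhere else, which, translated back via $\varepsilon_1=-\delta_1$ and $\varepsilon_u=1-\delta_u$ $(u\ge 2)$, is precisely the alternating pattern $(-1,+1,\cdots,-1,+1)$ of Lemma~\ref{key_lem3}; so your argument would let the paper prove Lemmas~\ref{key_lem2} and \ref{key_lem3} simultaneously. In a final write-up you should just display the equality step you flag explicitly: fixing a choice $W$ of one separator per gap, write
\[
r-1=\sum_{u\in W}\delta_u+\sum_{u\notin Z\cup W}\delta_u\ \ge\ 2(m-1)+\bigl(r-2m+1\bigr)=r-1,
\]
so every inequality is an equality, every index with $\delta_u\ge 2$ lies in $W$, and since elements of $W$ sit strictly between two elements of $Z$, the index $1$ is excluded, giving $\delta_1\le 1$, i.e. $\varepsilon_1\in\{-1,0\}$.
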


\begin{proof}
By Lemma~\ref{key_lem1}, we have that $0\le d_u\le\alpha_u-1$ for any $u\in[1, r]$, 
so that $\varepsilon_1 \le 0$ and $\varepsilon_u\le 1$ for $u\in[2, r]$. 
Since $M_{n-1}$ and $M_t$ are Ulrich modules, we have from Corollary~\ref{main_cor} that $\varepsilon_1+\cdots+\varepsilon_r=0$. 

First, we assume that $\varepsilon_1\le -2$. 
Since $\varepsilon_1+\cdots+\varepsilon_r=0$, there exist at least $-\varepsilon_1\ (\ge2)$ components 
of $(\varepsilon_2, \cdots, \varepsilon_r)$ that satisfy $\varepsilon_u=1$, $u\in[2, r]$. Next, we set $k\coloneqq -\varepsilon_1$, 
and suppose that
\[
\varepsilon_{u_1}=\varepsilon_{u_2}= \cdots=\varepsilon_{u_k}=1 \quad(u_1< u_2<\cdots <u_k) 
\]
are such components. 
By taking the first $k$ components with $\varepsilon_u=1$, 
we may assume that for any $j\in[1, k-1]$ there are no components satisfying $\varepsilon_{u^\prime}=1$ and $u_j<u^\prime<u_{j+1}$. 
Then, by Lemma~\ref{key_lem1} there exists a subscript $v$ such that $u_1<v<u_2$ and $\varepsilon_v\le-1$. 
Thus, there exists a subscript $u_{k+1}$ with $u_k<u_{k+1}$ such that $\varepsilon_{u_{k+1}}=1$, because $\varepsilon_1+\cdots+\varepsilon_r=0$. 
We again apply Lemma~\ref{key_lem1}, to find that there is a subscript $v^\prime$ such that $u_k<v^\prime<u_{k+1}$ and $\varepsilon_{v^\prime}\le-1$. 
These processes can be continued infinitely, but the sequence $(\varepsilon_1, \cdots, \varepsilon_r)$ is finite. 
Thus, we have that $\varepsilon_1\in\{\ -1, 0\}$.  

Next, we assume that there exists $u\in[2, r]$ such that $\varepsilon_u\le-2$. 
As in the above argument, we set $k= -\varepsilon_u \,(\ge 2)$ and $\varepsilon_{u_1}=\varepsilon_{u_2}= \cdots=\varepsilon_{u_k}=1$. 
If $\varepsilon_1=0$, then $d_1=\alpha_1-1$. Thus, the sequence $(d_1, \cdots, d_r)$ is of the form 
\[
(\alpha_1-1, \fbox{\quad A\quad}\,, \alpha_{u_1}-1, \fbox{\quad B \quad}\,, \alpha_{u_2}-1, \cdots). 
\]
By Lemma~\ref{key_lem1}, we can find elements $d_v$ with $d_v\le \alpha_v-3$ in both $A$ and $B$. 
Even if one of these is simply $d_u$ with $\varepsilon_u=d_u-(\alpha_u-2)\le-2$, the other provides us with the conclusion 
that there exists a subscript $u_{k+1}$ with $u_k<u_{k+1}$ such that $\varepsilon_{u_{k+1}}=1$. 
Therefore, we have obtained a contradiction, by the same argument as above. 
If $\varepsilon_1=-1$, then there also exists a subscript $u_{k+1}$ with $u_k<u_{k+1}$ and $\varepsilon_{u_{k+1}}=1$, 
because $\varepsilon_1+\cdots+\varepsilon_r=0$. Thus, we obtain a contradiction in a similar manner. 
As the consequence, we have that $\varepsilon_u\in\{-1, 0, 1\}$ for $u\in [2, r]$. 
\end{proof} 

\begin{lem}
\label{key_lem3}
Let $(\varepsilon_1^\prime, \varepsilon_2^\prime, \cdots, \varepsilon_\ell^\prime)\in\{-1, 1\}^\ell$ be the subsequence of 
$(\varepsilon_1, \varepsilon_2, \cdots, \varepsilon_r)$ obtained by removing all $0$ components from $(\varepsilon_1, \cdots, \varepsilon_r)$. 
Then, $(\varepsilon_1^\prime, \cdots, \varepsilon_\ell^\prime)$ takes an alternating form, as
\[
(-1, +1, -1, +1, \cdots, -1, +1). 
\]
\end{lem}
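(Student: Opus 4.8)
The plan is to prove directly that the nonzero subsequence is exactly the alternating word $(-1,+1,\dots,-1,+1)$ by a \emph{counting} argument, rather than by tracking the pattern entry by entry. First I would record the global constraint: since $M_{n-1}$ and $M_t$ are both Ulrich, Corollary~\ref{main_cor} gives $\varepsilon_1+\cdots+\varepsilon_r=0$, so by Lemma~\ref{key_lem2} the number of $+1$'s among the $\varepsilon_u$ equals the number of $-1$'s; call this common number $m$ (so $\ell=2m$, and $m\ge 1$ because $t\neq n-1$). It then suffices to show that the $-1$'s and $+1$'s strictly alternate and that the first nonzero entry is $-1$, for then the last entry is automatically $+1$ since the two counts agree.

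The key step is to reinterpret the second condition of Lemma~\ref{key_lem1} in terms of $\varepsilon$. Call a position $u$ \emph{full} if $d_u=\alpha_u-1$; by the definition of $\varepsilon_u$ this means $\varepsilon_u=1$ when $u\ge 2$, and $\varepsilon_1=0$ when $u=1$. Lemma~\ref{key_lem1} says that strictly between any two full positions there is some $w$ with $d_w\le\alpha_w-3$. Since $\varepsilon_1\ge-1$ (Lemma~\ref{key_lem2}) forbids $w=1$, such a $w$ satisfies $w\ge 2$ and $\varepsilon_w=-1$. Listing the full positions as $q_1<\cdots<q_f$, these forced $-1$'s lie in the pairwise disjoint open intervals $(q_1,q_2),\dots,(q_{f-1},q_f)$, hence are \emph{distinct}, accounting for at least $f-1$ of the $m$ available $-1$'s.

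I would then split on $\varepsilon_1$. If $\varepsilon_1=0$, the full positions are the $m$ indices carrying a $+1$ together with position $1$, so $f=m+1$ and the intervals force $\ge m$ distinct $-1$'s; as there are exactly $m$, each gap contains precisely one $-1$ and there is no $-1$ outside the gaps (in particular none after the last $+1$). Reading left to right yields exactly $(-1,+1,\dots,-1,+1)$. If $\varepsilon_1=-1$, the full positions are just the $m$ indices carrying a $+1$, so $f=m$ and the gaps force $\ge m-1$ distinct $-1$'s; adjoining the $-1$ at position $1$, which lies to the left of every full position, accounts for all $m$ of them, so again each interior gap holds exactly one $-1$, the positions before the first $+1$ other than $1$ carry $0$, and nothing follows the last $+1$. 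In both cases the subsequence is the claimed alternating word.

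The main obstacle I anticipate is the asymmetric role of the first index: because $d_1=\alpha_1-1$ corresponds to $\varepsilon_1=0$ (not $\varepsilon_1=1$), whereas $d_u=\alpha_u-1$ corresponds to $\varepsilon_u=1$ for $u\ge 2$, position $1$ must be treated as a full position exactly when $\varepsilon_1=0$ and as an extra $-1$ sitting to the left of all full positions when $\varepsilon_1=-1$. The argument hinges on the count of forced $-1$'s being \emph{tight}, i.e. equal to $m$ with no slack; this single fact simultaneously rules out two adjacent $-1$'s, pins the first nonzero entry to $-1$, and forbids any nonzero entry after the last $+1$. I would therefore check the disjoint-interval bookkeeping with care, making sure no unaccounted $-1$ can slip in.
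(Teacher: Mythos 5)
Your proof is correct, and it takes a genuinely different route from the paper's. The paper argues by contradiction after first pinning down the leading entry: it splits on $\varepsilon_1\in\{-1,0\}$, uses Lemma~\ref{key_lem1} to force $\varepsilon_1^\prime=-1$ (in the case $\varepsilon_1=0$, a leading $+1$ at position $p$ would give two full positions $1<p$ with no $d_w\le\alpha_w-3$ strictly between them), and then asserts that a non-alternating word starting with $-1$ and having equally many $+1$'s and $-1$'s must contain two adjacent $+1$'s, which again violates Lemma~\ref{key_lem1}. That last assertion is left implicit in the paper and needs its own small pigeonhole argument: if the $m$ plus-ones are pairwise non-adjacent inside a word of length $2m$ whose first letter is $-1$, they must occupy exactly the even positions, so the word is the alternating one. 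Your argument replaces this entry-by-entry reasoning with a single tight count: the $f$ full positions create $f-1$ pairwise disjoint open gaps, each of which Lemma~\ref{key_lem1} forces to contain a distinct $-1$ (and you correctly use Lemma~\ref{key_lem2} to exclude $w=1$ as the witness, since $\varepsilon_1\ge -1$ means $d_1>\alpha_1-3$); comparing with the exact number $m$ of available $-1$'s in the two cases $\varepsilon_1=0$ (where $f=m+1$) and $\varepsilon_1=-1$ (where $f=m$ and the $-1$ at position $1$ lies outside every gap) leaves no slack, which simultaneously yields the alternation, the leading $-1$, and the trailing $+1$. Both proofs rest on the same translation of Lemma~\ref{key_lem1} into the $\varepsilon$-language and the same sum-zero constraint from Corollary~\ref{main_cor}; yours is somewhat longer but self-contained, making explicit the counting that the paper's contradiction argument quietly relies on.
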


\begin{proof}
By definition, we have that $\varepsilon_1^\prime+\cdots+\varepsilon_\ell^\prime=0$, and 
the number of $+1$ elements appearing in $(\varepsilon_1^\prime, \cdots, \varepsilon_\ell^\prime)$ coincides with that of $-1$ elements. 

First, we assume that $\varepsilon_1=-1$. Then, $\varepsilon_1=\varepsilon_1^\prime=-1$. 
If the sequence $(\varepsilon_1^\prime, \cdots, \varepsilon_\ell^\prime)$ is not alternating, 
then we can find a section with $+1$ entries appearing consecutively in $(\varepsilon_1^\prime, \cdots, \varepsilon_\ell^\prime)$. 
This contradicts Lemma~\ref{key_lem1}. 

Next, we assume that $\varepsilon_1=0$. Then, $\varepsilon_1\neq \varepsilon_1^\prime$ and $d_1=\alpha_1-1$. 
Thus, we set $\varepsilon_p= \varepsilon_1^\prime\in\{-1, 1\}$. 
If $\varepsilon_p=\varepsilon_1^\prime=1$, then by Lemma~\ref{key_lem1} there exists a subscript $q$ with $1<q<p$ and $\varepsilon_q=-1$. 
This contradicts the choice of $\varepsilon_p$. Therefore, we have that $\varepsilon_p=\varepsilon_1^\prime=-1$. 
If the sequence $(\varepsilon_1^\prime, \cdots, \varepsilon_\ell^\prime)$ is not alternating, 
then we obtain a contradiction by the same argument given in the case of $\varepsilon_1=-1$.  
\end{proof} 

\begin{cor}
\label{main_cor2}
We suppose that $M_t$ is an Ulrich module. 
Then, we have that $n-a\le t\le n-1$. 
Furthermore, $M_{n-1}$ and $M_{n-a}$ are certainly Ulrich modules. 
\end{cor}

\begin{proof}
By Theorem~\ref{main3}, we can describe $t$ as 
$t=n-1-\displaystyle\sum^b_{c=1}\big(i_{k(2c-1)}-i_{k(2c)}\big)$, where 
$i_{k(1)}>i_{k(2)}>\cdots>i_{k(2b)}$ with $i_{k(2c-1)}\in\ttI_{n-1}$ for all $c=1,\cdots,b$. Hence, we have that 
\[
\sum_{c=1}^b\big(i_{k(2c-1)}-i_{k(2c)}\big)=i_{k(1)}-(i_{k(2)}-i_{k(3)})-\cdots -(i_{k(2b-2)}-i_{k(2b-1)})-i_{k(2b)}\le i_1-i_r=a-1. 
\]

Furthermore, $M_{n-1}$ and $M_{n-a}$ are always Ulrich modules, 
because $M_1$ and $M_a$ are special CM modules (see Proposition~\ref{Ulrich_Rdual}). 
\end{proof}

\begin{ex}
Suppose that $G=\frac{1}{158}(1, 57)$. Then, we have that $\frac{158}{57}=[3, 5, 2, 3, 3]$ and 
$i_1=57, i_2=13, i_3=8, i_4=3, i_5=1$, and hence $\ttI_{n-1}=\ttI_{157}=\{i_1, i_2, i_4\}$. 

From the following table and Theorems~\ref{main2} and \ref{main3}, we see that the Ulrich modules are
\[
M_{101}, M_{103}, M_{106}, M_{108}, M_{111}, M_{113}, M_{145}, M_{147}, M_{150}, M_{152}, M_{155} \text{ and } M_{157}.
\] 
 
{\footnotesize
 \begin{center}
 \begin{tabular}{l|l} 
   Sequences $\big(i_{k(1)}>i_{k(2)}>\cdots >i_{k(2b)} \big)$ & $t=n-1-\displaystyle\sum^b_{c=1}\big(i_{k(2c-1)}-i_{k(2c)}\big)$ \\ \hline
   $i_1>i_2 \quad (57>13)$&$157-(57-13)=113$ \\ 
   $i_1>i_3 \quad (57>8)$&$157-(57-8)=108$ \\ 
   $i_1>i_4 \quad (57>3)$&$157-(57-3)=103$ \\ 
   $i_1>i_5 \quad (57>1)$&$157-(57-1)=101$ \\ 
   $i_2>i_3 \quad (13>8)$&$157-(13-8)=152$ \\ 
   $i_2>i_4 \quad (13>3)$&$157-(13-3)=147$ \\ 
   $i_2>i_5 \quad (13>1)$&$157-(13-1)=145$ \\ 
   $i_4>i_5 \quad (3>1)$&$157-(3-1)=155$ \\ 
   $i_1>i_2>i_4>i_5 \quad (57>13>3>1)$&$157-(57-13)-(3-1)=111$ \\ 
   $i_1>i_3>i_4>i_5 \quad (57>8>3>1)$&$157-(57-8)-(3-1)=106$ \\ 
   $i_2>i_3>i_4>i_5 \quad (13>8>3>1)$&$157-(13-8)-(3-1)=150$  
\end{tabular}
\end{center}
 }
\end{ex}

\section{Further topics} 
\label{further_topics}

In this section, we will consider the following question. 

\begin{que}
For a cyclic quotient surface singularity $R$, we fix an integer $1\le m\le \rme(R)$. 
How many indecomposable MCM modules exist that satisfy $\mu_R(M_t)=m$? 
In particular, how many indecomposable Ulrich modules are there?
\end{que}

For simplicity, we denote the number of indecomposable MCM modules $M_t$ satisfying $\mu_R(M_t)=m$ by $\sfN_m$. 
That is, $\sfN_m=\#\{M_t\in\CM(R)\;|\;\mu_R(M_t)=m\}$.

\subsection{Number of minimal generators for each MCM module}
First, we show that there does exist an MCM $R$-module that satisfies $\mu_R(M_t)=m$ for any $m=1, \cdots, \rme(R)$. 
That is, $\sfN_m\ge1$ for any $m=1, \cdots, \rme(R)$. 

\begin{prop}
\label{existence}
For any integer $m=1, \cdots, \rme(R)$, there exists an MCM $R$-module $M_t$ such that $\mu_R(M_t)=m$. 
\end{prop}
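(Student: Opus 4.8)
The plan is to exhibit, for each $m$ in the range $1 \le m \le \rme(R)$, an explicit subscript $t$ realizing $\mu_R(M_t) = m$. By Theorem~\ref{main} and Corollary~\ref{main_cor}, the problem is equivalent to finding a sequence $(d_1, \cdots, d_r)$ satisfying the two conditions of Lemma~\ref{key_lem1} whose coordinate sum $d_1 + \cdots + d_r$ equals $m - 1$. Since $m-1$ ranges over $0, 1, \cdots, \rme(R) - 1$, I want to produce a family of admissible sequences interpolating between the all-zero sequence (giving $t = 0$, so $M_0 = R$ with $\mu_R(R) = 1$) and the extremal sequence $(\alpha_1 - 1, \alpha_2 - 2, \cdots, \alpha_r - 2)$ attached to $t = n-1$, whose sum is exactly $\rme(R) - 1$.

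First I would record the two endpoints: $m = 1$ is realized by $t = 0$ (the trivial decomposition $(0, \cdots, 0)$), and $m = \rme(R)$ is realized by $t = n-1$, whose decomposition was already computed in the excerpt to be $(\alpha_1 - 1, \alpha_2 - 2, \cdots, \alpha_r - 2)$ with coordinate sum $\rme(R) - 1$. The interpolation strategy is then to build admissible sequences of every intermediate total. The natural candidate is to take $t = n - 1 - (i_1 - i_u)$ for a suitable index $u$, or more generally to exploit the pairs in $\sfU$ from Theorem~\ref{main2}: each removal of a pair $(i_{k(c)}, i_{k(c)^\prime})$ decreases $t$ while, as the proof of Theorem~\ref{main2} shows, keeping the coordinate sum equal to $\rme(R) - 1$. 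That family only produces Ulrich modules, however, so it does \emph{not} suffice to hit intermediate values of $m$; I need decompositions of \emph{smaller} total.

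The cleaner route is a direct construction. I would define, for each target sum $k$ with $0 \le k \le \rme(R) - 1$, a sequence obtained from the extremal one $(\alpha_1 - 1, \alpha_2 - 2, \cdots, \alpha_r - 2)$ by successively decrementing coordinates from the right until the total drops from $\rme(R) - 1$ down to $k$. Concretely, one can zero out the trailing coordinates one unit at a time; each partial sequence of the form $(\alpha_1 - 1, \cdots, \alpha_{p-1} - 2, c, 0, \cdots, 0)$ with $0 \le c \le \alpha_p - 2$ must be checked against Lemma~\ref{key_lem1}. The first condition $0 \le d_u \le \alpha_u - 1$ is automatic since the entries are at most $\alpha_u - 2 \le \alpha_u - 1$. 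The second (the ``gap'' condition on consecutive maximal coordinates) is where care is needed, but truncations of the $n-1$ decomposition inherit admissibility because no coordinate equals $\alpha_u - 1$ except possibly $d_1 = \alpha_1 - 1$, and a single maximal coordinate never triggers the forbidden configuration. Verifying that these truncated and decremented sequences remain admissible, and that their sums sweep out every value in $\{0, 1, \cdots, \rme(R) - 1\}$ without gaps, is the main obstacle: one must ensure each unit decrement lands on an admissible sequence rather than skipping a value.

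Assembling these pieces, for each $m \in \{1, \cdots, \rme(R)\}$ I obtain an admissible sequence $(d_1, \cdots, d_r)$ with $d_1 + \cdots + d_r = m - 1$; by Lemma~\ref{key_lem1} this corresponds to a genuine subscript $t \in [0, n-1]$, and by Theorem~\ref{main} the associated module satisfies $\mu_R(M_t) = m$. I expect the write-up to hinge on a clean inductive or algorithmic description of the decrementing procedure together with a short verification of Lemma~\ref{key_lem1}'s second condition for each intermediate sequence, so that the realized sums form a contiguous block covering the full range.
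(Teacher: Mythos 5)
Your proposal is correct and takes essentially the same approach as the paper: both arguments reduce the problem via Theorem~\ref{main} and Lemma~\ref{key_lem1} to producing admissible sequences $(d_1,\cdots,d_r)$ of every coordinate sum from $0$ to $\rme(R)-1$, and both achieve this by unit decrements starting from a sequence of maximal sum. The only cosmetic difference is that the paper descends from an arbitrary Ulrich module by subtracting elements of $\ttI_t$ one at a time, whereas you decrement the explicit decomposition of $n-1$ from the right (which makes the admissibility check trivial, since only $d_1$ can equal $\alpha_1-1$); the underlying mechanism is identical.
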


\begin{proof}
First, we fix an integer $m=1, \cdots, \rme(R)$, and assume that there exists an MCM $R$-module $M_t$ such that $\mu_R(M_t)=m$.  
Then, for any $\ell =m, m-1, \cdots, 1$ there exists an MCM $R$-module $M_{t^\prime}$ satisfying $\mu_R(M_{t^\prime})=\ell$. 
Indeed, let $t=d_{1,t}i_1+d_{2,t}i_2+\cdots+d_{r,t}i_r$, and set $\widetilde{\ttI}_t=\{i_s \mid d_{s,t}\neq0 \ \text{for $s=1,2,\cdots,r$}\}$. 
Taking $i_u\in \widetilde{\ttI}_t$, we have by Theorem~\ref{main} that $\mu_R(M_{t-i_u})=m-1$. 
Similarly, we take $i_v\in\widetilde{\ttI}_{t-i_u}$, and we have that $\mu_R(M_{t-i_u-i_v})=m-2$. 
Since $d_{1,t}+d_{2,t}+\cdots+d_{r,t}=m-1$ from the hypothesis, we can repeat the above process $m-1$ times. 
Since there exists an Ulrich module, we can apply the above observation to the case that $m=\rme(R)$, thus obtain the desired conclusion.  
\end{proof}

From this result, we obtain the following relations between certain classes of MCM $R$-modules. 

\begin{cor}
\label{cat_ulrich}
Let $R$ be a cyclic quotient surface singularity. Then: 
\begin{itemize}
\item[$(1)$] If $\rme(R)=2$, then $\CM(R)=\mathrm{SCM}(R)=\add(R)\sqcup\mathrm{UCM}(R)$.  
\item[$(2)$] If $\rme(R)=3$, then $\CM(R)=\mathrm{SCM}(R)\sqcup\mathrm{UCM}(R)$. 
\item[$(3)$] If $\rme(R)>3$, then $\CM(R)\supsetneqq\mathrm{SCM}(R)\sqcup\mathrm{UCM}(R)$. 
\end{itemize}
Here, $\mathrm{SCM}(R)$ $($resp. $\mathrm{UCM}(R)$$)$ is the full subcategory of $\CM(R)$ consisting of special $($resp. Ulrich$)$ CM $R$-modules. 
\end{cor}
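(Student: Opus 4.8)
The plan is to prove the three statements by analyzing, for each value of $\rme(R)$, which MCM modules $M_t$ fall into each of the three classes $\add(R)$, $\mathrm{SCM}(R)$, and $\mathrm{UCM}(R)$, using the characterization of minimal generators via $\mu_R(M_t)=d_1+\cdots+d_r+1$ from Theorem~\ref{main}. The key observation is that the three classes correspond to distinguished ranges of $\mu_R(M_t)$: special CM modules satisfy $\mu_R(M_t)\le 2$ (by Theorem~\ref{cyclic_special}, non-free special modules are minimally two-generated, while $R$ itself is one-generated), and Ulrich modules satisfy $\mu_R(M_t)=\rme(R)$ by definition. So the whole corollary reduces to tracking where these ranges sit inside the full interval $1\le\mu_R(M_t)\le\rme(R)$.

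First I would handle case $(1)$, $\rme(R)=2$. Here the only possible values of $\mu_R(M_t)$ are $1$ and $2$, and every MCM module is either free (when $\mu_R=1$) or minimally two-generated. By Corollary~\ref{main_cor}, $\mu_R(M_t)=2=\rme(R)$ exactly when $M_t$ is Ulrich, and $\mu_R(M_t)=1$ means $M_t\cong R$, i.e.\ $M_t\in\add(R)$. I must also check that every module with $\mu_R(M_t)=2$ is special; since $\rme(R)=2$ forces $r=1$ (a single exceptional curve, an $A_1$-type chain), one verifies directly that the unique non-free indecomposable is both special and Ulrich, giving $\mathrm{SCM}(R)=\add(R)\sqcup\mathrm{UCM}(R)$ and hence $\CM(R)=\mathrm{SCM}(R)$. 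For case $(2)$, $\rme(R)=3$, the values of $\mu_R$ are $1,2,3$; modules with $\mu_R=1$ are free, those with $\mu_R=3$ are Ulrich, and those with $\mu_R=2$ are precisely the non-free special CM modules. The disjoint union $\CM(R)=\mathrm{SCM}(R)\sqcup\mathrm{UCM}(R)$ follows once I confirm that $\mu_R=2$ characterizes non-free special modules: this needs that the only modules with two generators are the special ones, which I would argue from Theorem~\ref{cyclic_special} together with the fact that $\mu_R(M_t)=2$ forces $d_1+\cdots+d_r=1$, i.e.\ some single $d_u=1$, so $t=i_u$ lies in the $i$-series.

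For case $(3)$, $\rme(R)>3$, I would exhibit a module $M_t$ with $3\le\mu_R(M_t)\le\rme(R)-1$, which is then neither free, nor special (as $\mu_R>2$), nor Ulrich (as $\mu_R<\rme(R)$); this establishes the strict inclusion $\CM(R)\supsetneqq\mathrm{SCM}(R)\sqcup\mathrm{UCM}(R)$. Existence of such a $t$ is immediate from Proposition~\ref{existence}, which guarantees an MCM module of each generation number $m=1,\dots,\rme(R)$; choosing $m=3$ (available since $\rme(R)>3$ means $3\le\rme(R)-1$) produces the desired witness. I would also note that in cases $(1)$ and $(2)$ the unions are genuinely disjoint because $\add(R)$ consists of modules with $\mu_R=1$, which cannot be Ulrich when $\rme(R)\ge2$, so $R\notin\mathrm{UCM}(R)$.

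The step I expect to require the most care is the claim that $\mu_R(M_t)=2$ characterizes exactly the non-free special CM modules, rather than merely being necessary. Theorem~\ref{cyclic_special} tells me special modules are two-generated, but I must rule out the existence of a non-special two-generated module; the cleanest route is via Theorem~\ref{main}, translating $\mu_R(M_t)=2$ into $d_1+\cdots+d_r=1$, which forces $t=i_u$ for a unique index $u$, and then invoking Theorem~\ref{cyclic_special} to conclude $M_t=M_{i_u}$ is special. This converse direction is the logical heart of the disjointness assertions in $(1)$ and $(2)$, and everything else is bookkeeping over the possible generation numbers.
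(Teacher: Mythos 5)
Your overall strategy is exactly the paper's (implicit) one: the corollary is bookkeeping of generation numbers, using Theorem~\ref{main} and Corollary~\ref{main_cor} to identify $\mathrm{UCM}(R)$ with $\{\mu_R=\rme(R)\}$, Theorem~\ref{cyclic_special} together with the decomposition $t=d_1i_1+\cdots+d_ri_r$ to identify the non-free specials with $\{\mu_R=2\}$, and Proposition~\ref{existence} with $m=3$ to produce the witness module in case $(3)$. Your closing paragraph also correctly isolates the one nontrivial converse (two-generated $\Rightarrow$ special) and proves it the right way: $\mu_R(M_t)=2$ forces $d_1+\cdots+d_r=1$, hence $t=i_u$ for a unique $u$, hence $M_t$ is special.

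However, your argument for case $(1)$ as written contains a genuine error: $\rme(R)=2$ does \emph{not} force $r=1$. Since $\rme(R)=\alpha_1+\cdots+\alpha_r-2(r-1)=\sum_u(\alpha_u-2)+2$ and each $\alpha_u\ge 2$, the condition $\rme(R)=2$ forces every $\alpha_u=2$, i.e.\ $R$ is an $A_r$-singularity $\frac{1}{r+1}(1,r)$, where $r$ can be arbitrary; for instance $\frac{1}{3}(1,2)$ has $\rme(R)=2$, two exceptional curves, and two non-free indecomposables $M_1,M_2$, not one. So the step ``one verifies directly that the unique non-free indecomposable is both special and Ulrich'' is not valid. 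Fortunately the claim is redundant: your general argument from the last paragraph ($\mu_R=2\Leftrightarrow t=i_u\Leftrightarrow M_t$ non-free special, and $\mu_R=\rme(R)\Leftrightarrow M_t$ Ulrich) applies verbatim when $\rme(R)=2$ and yields case $(1)$ with no assumption on $r$; alternatively, $\rme(R)=2$ means $G\subset\SL(2,\Bbbk)$, so $R$ is Gorenstein and $\CM(R)=\mathrm{SCM}(R)$ is immediate. Delete the $r=1$ claim and route case $(1)$ through the general characterization, and your proof is correct and matches the paper's.
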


\begin{rem}
The above results are typical in the case of cyclic quotient surface singularities. 
In fact, we have examples below, when $R$ is not a cyclic quotient surface singularity. 
\begin{itemize}
\item[(1)]
Proposition~\ref{existence} does not hold in a higher dimensional case. For example, we consider the action of $G=\left<\mathrm{diag}(-1, -1, -1)\right>$ on 
$S=\Bbbk[[x, y, z]]$. Then, the invariant subring $R=S^G$ is of finite CM representation type, and finitely many indecomposable MCMs are given by $R, \omega_R$ 
and $\Omega\omega_R$ (cf. \cite{Yo, LW}). Furthermore, we have that $\rme(R)=4$, but $\mu_R(\omega_R)=3$ and $\mu_R(\Omega\omega_R)=8$. 

\item[(2)] 
Corollary~\ref{cat_ulrich} (2) does not hold for non-cyclic cases. 
For example, let $R$ be the invariant subring given in \cite[Example~3.6]{Nak2}. Note that $\rme(R)=3$. 
Then, we can find some indecomposable MCM $R$-modules that are neither special CM modules nor Ulrich modules (see \cite[Example~A.5]{Nak2} and \cite{IW}). 
\end{itemize}
\end{rem}

\subsection{Number of Ulrich modules}

In the previous subsection, we investigated the number $\sfN_m$, and showed that $\sfN_m\ge 1$ for any $m=1, \cdots, \rme(R)$. 
In this subsection, we will focus on $\sfN_{\rme(R)}$, that is, the number of indecomposable Ulrich modules. 
First, we remark that Corollary~\ref{main_cor2} provides us with an upper bound on $\sfN_{\rme(R)}$. 
Namely, we have that $\sfN_{\rme(R)}\le a$. Next, we provide another bound in terms of the number of irreducible exceptional curves. 

\begin{thm}
\label{thm_upper}
Suppose that $R$ is a cyclic quotient surface singularity, whose number of  irreducible exceptional curves 
$($= that of non-free indecomposable special CM modules$)$ is $r$: 

\[\scalebox{0.8}{
\begin{tikzpicture}
\node (A1) at (1,0){$-\alpha_1$};
\node (A2) at (3,0){$-\alpha_2$}; 
\node (Ar) at (7,0){$-\alpha_r$}; 

\draw  [thick] (A1) circle [radius=0.38] ;
\draw  [thick] (A2) circle [radius=0.38] ;
\draw  [thick] (Ar) circle [radius=0.38] ;
\draw (A1)--(A2) ; \draw (A2)--(4.3,0) ; \draw[thick, dotted] (4.5,0)--(5.5,0); \draw (5.7,0)--(Ar) ;
\end{tikzpicture}
}\]

Then, we have that $r\le\sfN_{\rme(R)}\le 2^{r-1}$. 
In particular, we have that $\sfN_{\rme(R)}=r$ for $r=1,2$. 
Furthermore, if $r>2$ then 
$\sfN_{\rme(R)}=2^{r-1}$ holds if and only if $\alpha_u>2$ for all $u=2, \cdots, r-1$, and 
 $\sfN_{\rme(R)}=r$ holds if and only if $\alpha_2=\cdots=\alpha_{r-1}=2$. 
\end{thm}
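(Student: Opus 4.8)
The plan is to reduce the statement to a transparent counting problem for sign patterns and then evaluate it with a two–state transfer recursion. First I would use the bijection implicit in Theorems~\ref{main2} and \ref{main3}: since the decomposition $t=d_{1,t}i_1+\cdots+d_{r,t}i_r$ is unique (Lemma~\ref{key_lem1}), the assignment sending an Ulrich module $M_t$ to the sequence $(\varepsilon_1,\dots,\varepsilon_r)$, where $\varepsilon_1=d_{1,t}-(\alpha_1-1)$ and $\varepsilon_u=d_{u,t}-(\alpha_u-2)$ for $u\ge 2$, is injective, and by Corollary~\ref{main_cor} it lands bijectively onto the admissible sequences. By Lemmas~\ref{key_lem2} and \ref{key_lem3} these are exactly the sequences whose nonzero entries, read left to right, alternate as $(-1,+1,-1,+1,\dots,-1,+1)$, where a $-1$ is allowed only at position $1$ or at a position $u\ge 2$ with $i_u\in\ttI_{n-1}$ (equivalently $\alpha_u>2$), while a $+1$ is allowed at any position $2,\dots,r$. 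Thus $\sfN_{\rme(R)}$ equals the number of such patterns, the empty one corresponding to $M_{n-1}$.

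Next I would count these patterns by scanning the positions $1,2,\dots,r$ and recording whether the pattern seen so far is closed (every $-1$ already matched by a later $+1$) or open (one $-1$ still unmatched). Writing $a_u$ and $b_u$ for the number of admissible patterns on $\{1,\dots,u\}$ ending closed, respectively open, the alternating rule together with the positional constraints gives $a_1=b_1=1$ and, for $2\le u\le r$,
\[
a_u=a_{u-1}+b_{u-1},\qquad
b_u=\begin{cases}b_{u-1}+a_{u-1}&\text{if }\alpha_u>2,\\ b_{u-1}&\text{if }\alpha_u=2.\end{cases}
\]
A pattern is admissible overall precisely when it ends closed, so $\sfN_{\rme(R)}=a_r$ (for $r\ge 2$; the case $r=1$ is trivial, both bounds being $1$).

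Then I would read off both bounds from the sum $s_u:=a_u+b_u$. Since $a_r=a_{r-1}+b_{r-1}=s_{r-1}$, it suffices to control $s_{r-1}$. Here $s_1=2$, and for each interior index $2\le u\le r-1$ the recursion gives $s_u=2s_{u-1}$ if $\alpha_u>2$ and $s_u=s_{u-1}+b_{u-1}$ if $\alpha_u=2$; since $1\le b_{u-1}\le s_{u-1}-1$ (because $a_{u-1}\ge 1$), in all cases
\[
s_{u-1}+1\le s_u\le 2s_{u-1},
\]
with the upper equality exactly when $\alpha_u>2$. Telescoping over the $r-2$ interior positions yields $r\le\sfN_{\rme(R)}=s_{r-1}\le 2^{r-1}$. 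The two extremal cases then follow: $s_{r-1}=2^{r-1}$ forces every interior step to double, i.e.\ $\alpha_u>2$ for all $u=2,\dots,r-1$, and conversely; while if all interior $\alpha_u=2$ then $b_{u-1}=1$ throughout and each step adds exactly $1$, giving $s_{r-1}=r$, whereas a single interior $\alpha_{u_0}>2$ already forces $s_{u_0}\ge s_{u_0-1}+2$ and hence $s_{r-1}>r$. As a cross-check on the lower bound, the $r$ modules $M_{n-1}$ and $M_{\,n-1-(i_1-i_{k'})}$ for $k'=2,\dots,r$ are always Ulrich and pairwise distinct (cf.\ Proposition~\ref{Ulrich_Rdual}).

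The hard part will be the combinatorial dictionary in the first step: I must verify that the alternating description is not only necessary (Lemmas~\ref{key_lem2}, \ref{key_lem3}) but also sufficient, that is, that every alternating pattern with its $-1$'s placed in $\ttI_{n-1}$-positions genuinely satisfies both clauses of Lemma~\ref{key_lem1} and is therefore realized by some $t$. The delicate case is the boundary position $1$, where the condition ``$d_{1,t}=\alpha_1-1$'' corresponds to $\varepsilon_1=0$ rather than to a sign; hence the second clause of Lemma~\ref{key_lem1} applied to a pair of maximal coefficients that includes position $1$ must be checked separately, using that the first nonzero entry of an alternating pattern is always a $-1$ and so supplies the required intermediate index. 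Once this dictionary is secured, the transfer recursion and the extraction of both equality cases are routine.
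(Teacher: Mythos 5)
Your argument is correct, and although it begins from the same reduction as the paper --- Theorems~\ref{main2} and \ref{main3}, together with the uniqueness in Lemma~\ref{key_lem1}, identify $\sfN_{\rme(R)}$ with the number of admissible configurations --- the counting itself is genuinely different. The paper works directly with sequences of pairs from $\sfU$ satisfying $(\clubsuit)$: it reduces the bounds to the two extremal shapes of $\ttI_{n-1}$ (all interior $\alpha_u>2$, resp.\ all interior $\alpha_u=2$), counts the first case by an induction on $r$ through the sets $\calU^{r-k}$, obtaining ${}^\#\calU^{r-k}=2^{r-k}-1$, and handles the second case by direct inspection, obtaining $r-1$; the general bounds and the two ``only if'' directions then rest on the (strict) monotonicity of the count in $\ttI_{n-1}$, which the paper treats as clear rather than proving. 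Your route instead translates sequences of pairs into alternating sign patterns (a pair $\big(i_{k(c)},i_{k(c)^\prime}\big)$ is a matched $(-1,+1)$ at positions $k(c)<k(c)^\prime$) and runs a two-state transfer recursion over the positions; the resulting inequality $s_{u-1}+1\le s_u\le 2s_{u-1}$, with equality on the right exactly when $\alpha_u>2$, yields both bounds \emph{and} both equality characterizations uniformly for an arbitrary string $(\alpha_2,\dots,\alpha_{r-1})$. This buys two things: the strictness needed for the ``only if'' statements is proved rather than asserted, and you get an exact evaluation of $\sfN_{\rme(R)}$ in every case (your recursion reproduces the paper's examples, e.g.\ $\sfN_{\rme(R)}=6$ for $r=4$, $\beta=2$, $\gamma>2$). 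You were also right to single out sufficiency of the alternating dictionary as the delicate step --- that a pattern with its $-1$'s at $\ttI_{n-1}$-positions satisfies the second clause of Lemma~\ref{key_lem1} even when $\varepsilon_1=0$ --- and your fix (the first nonzero entry is a $-1$, which supplies the intermediate index $w$ with $d_w\le\alpha_w-3$) is precisely the verification that, in the paper, is embedded in the displayed decomposition inside the proof of Theorem~\ref{main2}.
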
 

\begin{proof}
By Theorems~\ref{main2} and \ref{main3}, $M_t$ is an Ulrich module if and only if $t=n-1$ or $t$ is described by a sequence of the $i$-series satisfying
\[(\clubsuit) \quad  i_{k(1)}>i_{k(2)}>\cdots>i_{k(2b)} \ \text{with}\ i_{k(2c-1)}\in\ttI_{n-1}\ \text{for all}\ c=1,\cdots,b.\]
Note that if we take a different sequence satisfying $(\clubsuit)$, then the corresponding subscripts will also be different, 
because the sequence of integers $(d_1, \cdots, d_r)$ defined in Lemma~\ref{key_lem1} is unique for each subscript $t$. 
Thus, $\sfN_{\rme(R)}-1$ is equal to the number of sequences satisfying the condition $(\clubsuit)$. 
Therefore, we show that the maximal (resp. minimal) number of such sequences is equal to $2^{r-1}-1$ (resp. $r-1$). 

Clearly, to obtain an upper bound on $\sfN_{\rme(R)}$ we should consider the case where the number of elements in $\ttI_{n-1}$ is maximal. 
Therefore, we consider the case that $\ttI_{n-1}=\{i_1,i_2,\cdots,i_{r-1}\}$, which holds if and only if $\alpha_u>2$ for all $u=2, \cdots, r-1$. 
In this situation, we can take any index in $\{i_1,i_2,\cdots,i_{r-1}\}$ as an odd term of sequences. 
Thus, the number of sequences satisfying $(\clubsuit)$ is simply
the number of possible choices of indices in $\{i_1,i_2,\cdots,i_{r-1}\}$ (except for the empty one), which is equal to $2^{r-1}-1$.  
(Note that if we choose an odd number of indices, then we can construct a sequence of the $i$-series satisfying $(\clubsuit)$ by adding $i_r$.) 

Similarly, in order to obtain a lower bound we should consider the case that $\ttI_{n-1}=\{i_1\}$, which holds if and only if $\alpha_2=\cdots=\alpha_{r-1}=2$. 
In this situation, we can easily see that the number of sequence is $r-1$. 
\end{proof}

We remark that there exist two upper bounds, $\sfN_{\rme(R)}\le a$ and $2^{r-1}$, 
and which bound is better depends on the case. 
If $r$ is small, then we can compute $\sfN_{\rme(R)}$ explicitly. 

\begin{ex}
Suppose that $R$ is a cyclic quotient surface singularity, whose dual graph $\sfC$ is given by one of the cases below. 
\begin{itemize}
\item[(1)] The case of 
\[\scalebox{0.8}{
\begin{tikzpicture}
\node (A0) at (0,0) {$\sfC:$} ;
\node (A1) at (1,0){$-\alpha$};
\node (A2) at (3,0){$-\beta$}; 
\node (A3) at (5,0) {$(\alpha, \beta\ge 2)$ .}; 

\draw  [thick] (A1) circle [radius=0.38] ;
\draw  [thick] (A2) circle [radius=0.38] ;

\draw (A1)--(A2) ;
\end{tikzpicture}
}\]
Then, we have that $\sfN_{\rme(R)}=2$. 

\item[(2)]
The case of 
\[\scalebox{0.8}{
\begin{tikzpicture}
\node (A0) at (0,0) {$\sfC:$} ;
\node (A1) at (1,0){$-\alpha$};
\node (A2) at (2.5,0){$-\beta$}; 
\node (A3) at (4,0){$-\gamma$}; 
\node (A4) at (6,0) {$(\alpha, \beta, \gamma\ge 2)$ .}; 

\draw  [thick] (A1) circle [radius=0.38] ;
\draw  [thick] (A2) circle [radius=0.38] ;
\draw  [thick] (A3) circle [radius=0.38] ;
\draw (A1)--(A2) ;
\draw (A2)--(A3) ;
\end{tikzpicture}
}\]
  \begin{itemize}
  \item[(2-1)] If $\beta=2$, then we have that $\sfN_{\rme(R)}=3$. 
  \item[(2-2)] If $\beta>2$, then we have that $\sfN_{\rme(R)}=4$. 
  \end{itemize}
  
\item[(3)]
The case of 
\[\scalebox{0.8}{
\begin{tikzpicture}
\node (A0) at (0,0) {$\sfC:$} ;
\node (A1) at (1,0){$-\alpha$};
\node (A2) at (2.5,0){$-\beta$}; 
\node (A3) at (4,0){$-\gamma$}; 
\node (A4) at (5.5,0){$-\delta$}; 
\node (A5) at (8,0) {$(\alpha, \beta, \gamma, \delta\ge 2)$ .}; 

\draw  [thick] (A1) circle [radius=0.38] ;
\draw  [thick] (A2) circle [radius=0.38] ;
\draw  [thick] (A3) circle [radius=0.38] ;
\draw  [thick] (A4) circle [radius=0.38] ;
\draw (A1)--(A2) ; \draw (A2)--(A3) ; \draw (A3)--(A4) ;

\end{tikzpicture}
}\]
  \begin{itemize}
  \item[(3-1)] If $\beta=2, \gamma=2$, then we have that $\sfN_{\rme(R)}=4$. 
  \item[(3-2)] If $\beta=2, \gamma>2$, then we have that $\sfN_{\rme(R)}=6$. 
  \item[(3-3)] If $\beta>2, \gamma=2$, then we have that $\sfN_{\rme(R)}=6$. 
  \item[(3-4)] If $\beta>2, \gamma>2$, then we have that $\sfN_{\rme(R)}=8$. 
  \end{itemize}
\end{itemize}
\end{ex}

\begin{proof} 
We only demonstrate the case (3-2), as the other cases are similar. 

Let $i_1, \cdots, i_4$ be the $i$-series corresponding to each exceptional curve. 
Since $\beta=2$ and $\gamma>2$, we have that $\ttI_{n-1}=\{i_1, i_3\}$. 
The statement follows from Theorems~\ref{main2} and \ref{main3}, 
because we can take sequences $\{i_1>i_2\}, \{i_1>i_3\}, \{i_1>i_4\}, \{i_3>i_4\}, \{i_1>i_2>i_3>i_4\}$. 
\end{proof}

\subsection{Examples} 
We finish this paper by presenting some typical examples. 
In particular, we completely determine the number $\sfN_m$. 
By Theorem~\ref{main}, we know that special CM modules behave like a ``basis". 
Thus, we can identify each MCM $R$-module $M_t$ with the lattice point $(d_{1,t}, \cdots, d_{r,t})\in\ZZ^r$. 

\begin{ex}
Suppose that $G=\frac{1}{23}(1,6)$ and $R=\Bbbk[[x, y]]^G$. Then, $\frac{23}{6}=4-\frac{1}{6}=[4, 6]$ and $\rme(R)=8$. 
The $i$-series are given by $i_1=6, i_2=1$. 
In this situation, we identify each subscript $t=0,1,\cdots ,22$ with the lattice point $(d_{1,t}, d_{2,t})$. 
For example, because $20=(1+1)+(6+6+6)$, this corresponds to the lattice point $(2,3)$. 
\[\scalebox{0.6}{
\begin{tikzpicture}
\node (P1) at (7.5,0) {{\large $d_2$}};
\node (P2) at (-0.5,5) {{\large $d_1$}};

\draw [step=1,thin, gray] (0,0) grid (6,4);
\draw[->, line width=0.03cm] (0,0)--(0,5);
\draw[->, line width=0.03cm] (0,0)--(7,0);

\node (A0) at (0,0){{\Large{$0$}}};
\node (A1) at (1,0){{\Large{$1$}}};
\node (A2) at (2,0){{\Large{$2$}}};
\node (A3) at (3,0){{\Large{$3$}}};
\node (A4) at (4,0){{\Large{$4$}}};
\node (A5) at (5,0){{\Large{$5$}}};
\node (A6) at (0,1){{\Large{$6$}}};
\node (A7) at (1,1){{\Large{$7$}}};
\node (A8) at (2,1){{\Large{$8$}}};
\node (A9) at (3,1){{\Large{$9$}}};
\node (A10) at (4,1){{\Large{$10$}}};
\node (A11) at (5,1){{\Large{$11$}}};
\node (A12) at (0,2){{\Large{$12$}}};
\node (A13) at (1,2){{\Large{$13$}}};
\node (A14) at (2,2){{\Large{$14$}}};
\node (A15) at (3,2){{\Large{$15$}}};
\node (A16) at (4,2){{\Large{$16$}}};
\node (A17) at (5,2){{\Large{$17$}}};
\node (A18) at (0,3){{\Large{$18$}}};
\node (A19) at (1,3){{\Large{$19$}}};
\node (A20) at (2,3){{\Large{$20$}}};
\node (A21) at (3,3){{\Large{$21$}}};
\node (A22) at (4,3){{\Large{$22$}}}; 

\end{tikzpicture}
}\]

By Theorem~\ref{main}, we have that $d_{1,t}=-d_{2,t}+\mu(M_t)-1$. 
If $\mu_R(M_t)=\rme(R)=8$, then an MCM module whose corresponding lattice point is on the line $d_{1,t}=-d_{2,t}+7$ is an Ulrich module. 
Thus, in this case there are two Ulrich modules, $(M_{17}$ and $M_{22})$. 
Similarly, if $\mu_R(M_t)=7$, then the figure below implies that $\sfN_7=3$.  
\begin{center}
\begin{tikzpicture} 
\node (fig1) at (0,0) 
{\scalebox{0.48}{
\begin{tikzpicture}
\node (P1) at (8.5,0) {{\Large $d_2$}};
\node (P2) at (-0.5,6) {{\Large $d_1$}};
\node (A0) at (0,0){{\LARGE$\bullet$}};
\node (A1) at (1,0){{\LARGE$\bullet$}};
\node (A2) at (2,0){{\LARGE$\bullet$}};
\node (A3) at (3,0){{\LARGE$\bullet$}};
\node (A4) at (4,0){{\LARGE$\bullet$}};
\node (A5) at (5,0){{\LARGE$\bullet$}};
\node (A6) at (0,1){{\LARGE$\bullet$}};
\node (A7) at (1,1){{\LARGE$\bullet$}};
\node (A8) at (2,1){{\LARGE$\bullet$}};
\node (A9) at (3,1){{\LARGE$\bullet$}};
\node (A10) at (4,1){{\LARGE$\bullet$}};
\node (A11) at (5,1){{\LARGE$\bullet$}};
\node (A12) at (0,2){{\LARGE$\bullet$}};
\node (A13) at (1,2){{\LARGE$\bullet$}};
\node (A14) at (2,2){{\LARGE$\bullet$}};
\node (A15) at (3,2){{\LARGE$\bullet$}};
\node (A16) at (4,2){{\LARGE$\bullet$}};
\node (A17) at (5,2){{\LARGE$\bullet$}};
\node (A18) at (0,3){{\LARGE$\bullet$}};
\node (A19) at (1,3){{\LARGE$\bullet$}};
\node (A20) at (2,3){{\LARGE$\bullet$}};
\node (A21) at (3,3){{\LARGE$\bullet$}};
\node (A22) at (4,3){{\LARGE$\bullet$}};

\draw[->, line width=0.03cm] (0,0)--(0,6);
\draw[->, line width=0.03cm] (0,0)--(8,0);

\draw[line width=0.07cm] (2,5)--(7,0);
\node at (5,4){{\Large$d_1=-d_2+7$}} ;
\end{tikzpicture}
}} ;

\node (fig2) at (6,0) 
{\scalebox{0.48}{
\begin{tikzpicture}
    \node (P1) at (8.5,0) {{\Large $d_2$}};
\node (P2) at (-0.5,6) {{\Large $d_1$}};
\node (A0) at (0,0){{\LARGE$\bullet$}};
\node (A1) at (1,0){{\LARGE$\bullet$}};
\node (A2) at (2,0){{\LARGE$\bullet$}};
\node (A3) at (3,0){{\LARGE$\bullet$}};
\node (A4) at (4,0){{\LARGE$\bullet$}};
\node (A5) at (5,0){{\LARGE$\bullet$}};
\node (A6) at (0,1){{\LARGE$\bullet$}};
\node (A7) at (1,1){{\LARGE$\bullet$}};
\node (A8) at (2,1){{\LARGE$\bullet$}};
\node (A9) at (3,1){{\LARGE$\bullet$}};
\node (A10) at (4,1){{\LARGE$\bullet$}};
\node (A11) at (5,1){{\LARGE$\bullet$}};
\node (A12) at (0,2){{\LARGE$\bullet$}};
\node (A13) at (1,2){{\LARGE$\bullet$}};
\node (A14) at (2,2){{\LARGE$\bullet$}};
\node (A15) at (3,2){{\LARGE$\bullet$}};
\node (A16) at (4,2){{\LARGE$\bullet$}};
\node (A17) at (5,2){{\LARGE$\bullet$}};
\node (A18) at (0,3){{\LARGE$\bullet$}};
\node (A19) at (1,3){{\LARGE$\bullet$}};
\node (A20) at (2,3){{\LARGE$\bullet$}};
\node (A21) at (3,3){{\LARGE$\bullet$}};
\node (A22) at (4,3){{\LARGE$\bullet$}};

\draw[->, line width=0.03cm] (0,0)--(0,6);
\draw[->, line width=0.03cm] (0,0)--(8,0);

\draw[line width=0.07cm] (1,5)--(6,0);
\node at (4.5,4){{\Large$d_1=-d_2+6$}} ;
\end{tikzpicture}
}} ;
\end{tikzpicture}
\end{center}
\end{ex}
In this way, we can compute the number $\sfN_m$. In general, we have the following.

\begin{ex}
\label{count_number1}
Take integers $\alpha, \beta\ge 2$, 
and consider $G=\frac{1}{n}(1,a)$ satisfying $n/a=\alpha-\frac{1}{\beta}$. 
Then, $R=S^G$ is the cyclic quotient surface singularity whose dual graph is 
\[\scalebox{0.8}{
\begin{tikzpicture}

\node (A1) at (0,0){$-\alpha$};
\node (A2) at (2,0){$-\beta$}; 
\draw  [thick] (A1) circle [radius=0.38] ;
\draw  [thick] (A2) circle [radius=0.38] ;
\draw (A1)--(A2) ;
\end{tikzpicture}
}\]
and $\rme(R)=\alpha+\beta-2$. The $i$-series are given by $i_1=\beta, \; i_2=1$. 

If $\alpha\le\beta$, then we have the following: 
\[\scalebox{0.5}{
\begin{tikzpicture}
\node (P1) at (9.5,0) {{\Large $d_2$}};
\node (P2) at (-0.5,6) {{\Large $d_1$}};
\node  at (0,0){{\LARGE$\bullet$}};
\node  at (1,0){{\LARGE$\bullet$}};
\node  at (3,0){{\LARGE$\bullet$}};
\node  at (4,0){{\LARGE$\bullet$}};
\node  at (6,0){{\LARGE$\bullet$}};
\node  at (7,0){{\LARGE$\bullet$}};
\node  at (8,0){{\LARGE$\bullet$}};
\node  at (0,1){{\LARGE$\bullet$}};
\node  at (0,5){{\LARGE$\bullet$}};
\node  at (3,5){{\LARGE$\bullet$}};
\node  at (4,5){{\LARGE$\bullet$}};
\node  at (6,5){{\LARGE$\bullet$}};
\node  at (7,4){{\LARGE$\bullet$}};
\node  at (7,5){{\LARGE$\bullet$}};
\node  at (8,3){{\LARGE$\bullet$}};
\node  at (8,4){{\LARGE$\bullet$}};

\draw[->, line width=0.03cm] (0,0)--(0,6);
\draw[->, line width=0.03cm] (0,0)--(9,0);
\draw[dotted, ultra thick] (1.4,0.5)--(2.6,0.5) ;
\draw[dotted, ultra thick] (0.9,5)--(2.1,5) ;
\draw[dotted, ultra thick] (4.4,0.5)--(5.6,0.5) ;
\draw[dotted, ultra thick] (4.4,5)--(5.6,5) ;
\draw[dotted, ultra thick] (0.5,1.4)--(0.5, 2.6) ;
\draw[dotted, ultra thick] (1,1)--(2, 2) ;
\draw[dotted, ultra thick] (8,0.9)--(8,2.1) ;
\draw[dotted, ultra thick] (5,2)--(6, 3) ;

\draw[dashed, thick] (3,0.8)--(3, 4.2) ;
\draw (8.2,0.2) to [out=60, in=-60] node [ swap,above, xshift=0.8cm]{{\Large$\alpha-1$}} (8.2,4.8) ;
\draw (3.2,5.3) to [out=30, in=150] node [ swap,above]{{\Large$\alpha-1$}} (7.8,5.3) ;
\draw (0.2,5.3) to [out=30, in=150] node [ swap,above]{{\Large$\beta-\alpha$}} (2.8,5.3) ;
\end{tikzpicture}
}\]  

\[\begin{array}{lclclclclcl}
\sfN_{\alpha+\beta-2}&=&2,&&\sfN_{\beta-1}&=&\alpha,&&\sfN_{\alpha-2}&=&\alpha-2, \\
\sfN_{\alpha+\beta-3}&=&3,&&&\vdots&&&&\vdots& \\
&\vdots&&&\sfN_{\alpha}&=&\alpha,&&\sfN_{2}&=&2,\\
\sfN_{\beta}&=&\alpha,&&\sfN_{\alpha-1}&=&\alpha-1,&&\sfN_{1}&=&1. 
\end{array}\]

The case with $\alpha\ge\beta$ is similar. (Replace $\alpha$ by $\beta$ and vice versa.) 
\end{ex}

We can also determine $\sfN_m$ in the following situation.  

\begin{ex}
\label{count_number2}
Consider a cyclic quotient surface singularity $R$ whose dual graph is 
\[\scalebox{0.8}{
\begin{tikzpicture}
\node (A1) at (0,0){$-2$};
\node (A2) at (3,0){$-2$}; 
\node(A3) at (4.5,0){$-\alpha$};
\node (A4) at (6,0){$-2$}; 
\node (A5) at (9,0){$-2$}; 
\draw  [thick] (A1) circle [radius=0.38] ;
\draw  [thick] (A2) circle [radius=0.38] ;
\draw  [thick] (A3) circle [radius=0.38] ;
\draw  [thick] (A4) circle [radius=0.38] ;
\draw  [thick] (A5) circle [radius=0.38] ;

\draw (A1)--(1,0) ;
\draw[thick, dotted] (1.1,0)--(1.9,0) ;
\draw (2,0)--(A2)--(A3)--(A4)--(7,0) ;
\draw[thick, dotted] (7.1,0)--(7.9,0) ; 
\draw (8,0)--(A5) ;

\draw (0,-0.5) to [out=-30, in=210] node [swap,below]{{\footnotesize$A-1$}} (3,-0.5) ;
\draw (6,-0.5) to [out=-30, in=210] node [swap,below]{{\footnotesize$B-1$}} (9,-0.5) ;
\end{tikzpicture}
}\]
where $\alpha\ge 2$ and $A, B\ge 1$. Then, $\rme(R)=\alpha$, and we have the following: 
\[\begin{array}{lclcclcl}
\sfN_{\alpha}&=&AB, &&& \sfN_{3}&=&AB,  \\
\sfN_{\alpha-1}&=&AB, &&& \sfN_{2}&=&A+B-1, \\
&\vdots&&&& \sfN_{1}&=&1. \\
\sfN_{4}&=&AB, &&&&&
\end{array}\] 
\end{ex}

\begin{rem}
If we can obtain $\sfN_m$ for all $m=1, \cdots, \rme(R)$, as in Example~\ref{count_number1} and \ref{count_number2}, 
then we can compute the Hilbert-Kunz multiplicity $\rme_{\text{HK}}(R)$. 
This is a numerical invariant in positive characteristic commutative algebras, 
and can be obtained by the formula $\rme_{\text{HK}}(R)=\frac{1}{n}\sum^n_{t=0}\mu_R(M_t)$ 
in our situation (see \cite[Appendix]{Nak2}). 
Thus, we let $R$ be defined as in Example~\ref{count_number1} (resp. Example~\ref{count_number2}). 
Then, we have that $\rme_{\text{HK}}(R)=\frac{1}{2n}\{(\alpha\beta-2)(\alpha+\beta)+2\}$ 
(resp. $\rme_{\text{HK}}(R)=\frac{1}{2n}\{AB(\alpha-2)(\alpha+3)+4(A+B)-2\}$). 
\end{rem}

\medskip

\subsection*{Acknowledgements}
The authors would like to thank Professor Mitsuyasu Hashimoto and Professor Yukari Ito for careful reading 
the previous version of this paper and for their valuable comments. 
The authors also thank the referee for providing us with many valuable suggestions. 
The authors would like to thank Editage (www.editage.jp)  for English language editing. 

The first author is supported by Grant-in-Aid for JSPS Fellows 26-422. 
The second author is supported by JSPS Grant-in-Aid for Scientific Research (C) 25400050. 


\end{document}